\theoremstyle{plain}
\newtheorem{theorem}{Theorem}
\newtheorem {lemma}{Lemma}
\newtheorem{proposition}{Proposition}
\newtheorem*{2'}{Theorem 2'}
\newtheorem*{3'}{Theorem 3'}
\theoremstyle{remark}
\newtheorem*{Remark 1}{Remark 1}
\newtheorem*{Remark 2}{Remark 2}
\newtheorem*{Remark 3}{Remark 3}
\newtheorem*{Remark 4}{Remark 4}
\numberwithin{equation}{section}
\begin{document}

\title [The speed of a general  random walk reinforced by its  history]
{The speed of a general random walk reinforced by its recent history}

\author{Ross G. Pinsky}


\address{Department of Mathematics\\
Technion---Israel Institute of Technology\\
Haifa, 32000\\ Israel}
\email{ pinsky@math.technion.ac.il}

\urladdr{http://www.math.technion.ac.il/~pinsky/}

\subjclass[2010]{60F10, 60K35, 60J10} \keywords{random walk with reinforcement, Cram\'er's theorem, Legendre-Fenchel transform}
\date{}

\begin{abstract}

We consider a class of random walks whose increment distributions depend on the average value of the process over its most recent $N$ steps.
We investigate the speed of the process, and in particular, the limiting speed as the ``history window'' $N\to\infty$.

\end{abstract}

\maketitle
\section{Introduction and Statement of Results}\label{intro}
Over the past couple of decades, many papers have been devoted to the study
of edge or vertex reinforced random walks and excited  (also known as ``cookie'') random walks on $\mathbb{Z}$.
These processes have a simple underlying transition mechanism---such as simple symmetric random walk---but this
mechanism is ``reinforced'' or ``excited''  depending on the location of the random walk and its complete  history at that location.
For survey papers which include many references, see \cite{P} and \cite{KZ}.

In  this paper, we consider
random walks on $\mathbb{R}$ with a simpler and very natural
mechanism for reinforcement; namely,  the reinforcement is catalyzed  by the behavior of the
random walk path over a bounded interval of its  history, irrespective of its present location. In fact, we will define
two versions of such a process.
To define these processes, let $N,l\in\mathbb{N}$ with $l\le N$, let $\{P^{(\text{inc})}_i\}_{i=0}^l$ be probability measures on $\mathbb{R}$ with
 finite expectations $\mu_i=\int_{-\infty}^\infty xP^{(\text{inc})}_i(dx)$, and
 let $\{r_i\}_{i=1}^l$ be a sequence. We make the following assumption.
\medskip

\noindent \bf Assumption A.\rm\
The  sequence $\{\mu_i\}_{i=0}^l$ of the expectations corresponding to the measures
$\{P^{(\text{inc})}_i\}_{i=0}^l$
 is strictly increasing, and the sequence $\{r_i\}_{i=1}^l$  satisfies
$$
\begin{aligned}
&r_i<\mu_i<r_{i+1},\ i=1,\cdots, l-1;\\
&\mu_0<r_1\ \ \text{and}\
  r_l<\mu_l.
\end{aligned}
$$

In our notation
for the processes, we suppress the dependence on all the above parameters with the exception of $N$.
One version, the \it instantaneous\rm\ version,  will be denoted by $\{X^{N;I}_n\}_{n=0}^\infty$, while the other
version, the \it delayed\rm\ version, will be denoted by $\{X^{N;D}_n\}_{n=0}^\infty$.
Most of this paper will concern the delayed version, but we will define
the instantaneous version first, because this will make it easier to describe the delayed version.
For convenience, define $r_0=-\infty$ and $r_{l+1}=+\infty$.

The instantaneous version $\{X^{N;I}_n\}_{n=0}^\infty$ is defined as follows. Let $X^{N;I}_0=0$ and let
$\{X^{N;I}_n\}_{n=1}^N$ be distributed like a random walk with increment distribution $P^{(\text{inc})}_{i_0}$, for
some $i_0$. The continuation of the process is defined inductively as follows.
Let $n\ge N+1$ and let  $i$ be such that the process used the distribution $P^{(\text{inc})}_i$ at time $n-1$.
The process looks back at its most recent $N$ steps. If the average value,  $\frac{X^{N;I}_n-X^{N;I}_{n-N}}N$, of those
steps fell in the range $[r_i,r_{i+1})$, then at time $n$ the process jumps with increment distribution
$P^{(\text{inc})}_i$.
However, if     the average value of those steps was strictly less than $r_i$,
then at time $n$ the process jumps with increment distribution
$P^{(\text{inc})}_{i-1}$, while if
 the average value of those steps was larger or equal to $r_{i+1}$
 then
then at time $n$ the process jumps with increment distribution
$P^{(\text{inc})}_{i+1}$.

The delayed version $\{X^{N;D}_n\}_{n=0}^\infty$ is defined similarly, the only difference being that this process  is required to
use any particular jump distribution at least $N$ consecutive times, thereby insuring that
the reinforcement that causes the process to switch from one increment distribution, say $i$, to another increment
distribution is due to the behavior of the process while in the $i$ regime.
Thus, $\{X^{N;D}_n\}_{n=0}^N$ is defined
identically to $\{X^{N;I}_n\}_{n=0}^N$, and for each time $n\ge N+1$, if the jump distribution
used  at time $n-1$ was not used at time $n-N$, then the jump distribution used at time $n-1$  is automatically used again
at time $n$, while otherwise the jump distribution at time $n$ is determined
as it was for the instantaneous version.

We call each version of the process a random walk reinforced by its recent history.
Both versions are  natural models for the fortunes
of various economic commodities, such as stocks, or for the popularity of various social trends,
 which respond positively to recent success and negatively to recent failure.

We call $N$ the \it history window\rm\ and $\{r_i\}_{i=1}^l$ the \it threshold \rm\ levels.
In Assumption B below, we specify a simple condition  to ensure that the processes will almost surely jump an infinite number of times according to  each
of the $l+1$ increment distributions.


In this paper, we  investigate the speeds of these processes.
For the delayed version, it's rather  easy to show that the speed exists almost surely   and is almost surely constant.
\begin{proposition}\label{speed}
Let Assumptions A and Assumption B (given below) hold.
Then the speed
$$
s^D(N,r_1,\cdots, r_l):=\lim_{n\to\infty}\frac{X^{N;D}_n}n
$$
 exists almost surely and is almost surely constant.
\end{proposition}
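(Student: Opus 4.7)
The plan is to exploit the fact that, in the delayed version, the forced use of a newly chosen distribution for $N$ consecutive steps supplies an embedded Markov chain on regime indices with a clean renewal structure.

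Let $\theta_1<\theta_2<\cdots$ denote the successive times at which the increment distribution actually changes, and let $J_k\in\{0,1,\ldots,l\}$ be the index used on the block $[\theta_k,\theta_{k+1})$. By the delay rule, $\theta_{k+1}-\theta_k\ge N$ and the increments on this block are i.i.d.\ $P^{(\text{inc})}_{J_k}$. Hence at time $\theta_k+N$ the window of the $N$ most recent jumps consists of $N$ i.i.d.\ $P^{(\text{inc})}_{J_k}$ samples, and thereafter the dynamics agree with those of a plain random walk with this increment law, stopped when the sliding window-average first leaves $[r_{J_k},r_{J_k+1})$; the side of exit then determines $J_{k+1}\in\{J_k-1,J_k+1\}$. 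Consequently $(J_k)$ is a Markov chain on $\{0,\ldots,l\}$ and, conditional on $J_k=i$, the triple $(\theta_{k+1}-\theta_k,\, X^{N;D}_{\theta_{k+1}}-X^{N;D}_{\theta_k},\, J_{k+1})$ has a law depending only on $i$.

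By the strict inequalities in Assumption~A, $\mu_i$ lies strictly interior to $[r_i,r_{i+1})$, so Cram\'er-type large deviation estimates applied to non-overlapping blocks of $N$ i.i.d.\ $P^{(\text{inc})}_i$ variables give a geometric upper bound for the tail of the exit time of the window-average from $[r_i,r_{i+1})$. In particular the conditional expected duration $\mathcal{T}_i=E[\theta_{k+1}-\theta_k\mid J_k=i]$ and expected displacement $\mathcal{D}_i=E[X^{N;D}_{\theta_{k+1}}-X^{N;D}_{\theta_k}\mid J_k=i]$ are finite. Assumption~B (to be stated) will ensure the finite chain $(J_k)$ is irreducible on $\{0,\ldots,l\}$ and hence ergodic with unique stationary distribution $\pi$. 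The SLLN for additive functionals of an ergodic finite Markov chain then yields
\[
\frac{X^{N;D}_{\theta_K}}{\theta_K}\;\longrightarrow\;\frac{\sum_{i=0}^l\pi_i\mathcal{D}_i}{\sum_{i=0}^l\pi_i\mathcal{T}_i}\qquad\text{a.s.},
\]
with a deterministic limit.

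A standard interpolation lifts this to convergence along all $n$: for $\theta_K\le n<\theta_{K+1}$, the geometric tails of $\theta_{K+1}-\theta_K$ give $\max_{k\le K}(\theta_{k+1}-\theta_k)=O(\log K)$ a.s., and on each such block $|X^{N;D}_n-X^{N;D}_{\theta_K}|$ is dominated by a partial sum of $O(\log K)$ i.i.d.\ $|P^{(\text{inc})}_{J_K}|$-distributed variables, which is $o(\theta_K)$. Hence $X^{N;D}_n/n$ converges to the same constant, yielding both the existence of the speed and its almost sure constancy. The principal technical obstacle is the large-deviation tail bound for the regime duration; this is where the strictness in Assumption~A ($r_i<\mu_i<r_{i+1}$, rather than mere non-strict inequality) is essential, since it is precisely this separation that makes exiting $[r_i,r_{i+1})$ a large-deviation event for the $N$-step sample mean.
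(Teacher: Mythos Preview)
Your proof is correct and follows essentially the same route as the paper: you introduce the embedded birth--and--death chain $(J_k)$ on regime indices (the paper's $\{Y^{N;D}_m\}$), establish finiteness of the expected block length and displacement, apply the ergodic theorem to obtain the renewal--reward formula $\sum_i\pi_i\mathcal{D}_i\big/\sum_i\pi_i\mathcal{T}_i$, and then interpolate to all $n$; this is exactly what the paper does in deriving \eqref{explicitspeed}. One small caveat: Proposition~\ref{speed} is stated under Assumptions~A and~B only (no exponential moments), so rather than invoking Cram\'er you should get the geometric tail of the regime duration directly from Assumption~B---for instance, each non-overlapping block of $N$ fresh $P_i^{(\text{inc})}$-increments has a fixed positive probability $\big(P_i^{(\text{inc})}((-\infty,r_i))\big)^N>0$ of all lying below $r_i$, which forces the window average out of $[r_i,r_{i+1})$.
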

The proof of  the proposition is embedded in the proof of the main result, Theorem \ref{DD1}, and is noted where it occurs.
The main result concerns  the limiting speed of the delayed version as the history window $N\to\infty$.
Here is the condition we impose  to ensure that the processes will almost surely jump an infinite number of times according to  each
of the $l+1$ increment distributions.

\noindent \bf Assumption B.\rm\
$$
\begin{aligned}
&P_i^{(\text{inc})}\big((-\infty,r_i)\big)>0\ \text{and}\ P_i^{(\text{inc})}\big([r_{i+1},\infty)\big)>0,\ \text{for}\ i=1,\cdots ,l-1;\\
&P_0^{(\text{inc})}([r_1,\infty))>0,\ \ P_l^{(\text{inc})}((-\infty,r_l))>0.
\end{aligned}
$$
(Assumption B is a bit stronger than  necessary to ensure that the process will almost surely jump an infinite number of times according to  each
of the $l+1$ increment distributions, but we use it so as to  simplify the exposition.)

A key  technical  tool that  will be used is  Cram\'er's  large deviations theorem for the empirical mean of an IID sequence.
In order to have this at our disposal, we
need to make a  two-sided exponent moment assumption on the increment distributions $\{P_i^{(\text{inc})}\})_{i=0}^l$.
Let
$$
M_{{P^{(\text{inc})}_i}}(t)=\int_{-\infty}^\infty e^{tx}P_i^{(\text{inc})}(dx)
$$
denote the moment generating function of the distribution $P_i^{(\text{inc})}$.
\medskip

\noindent \bf Assumption C.\rm\
 There exists a  $t_0>0$ such that  $M_{{P^{(\text{inc})}_i}}(\pm t_0)<\infty$, for  $i=0,1,\cdots, l$.
\medskip

Let $I_i(r)$ denote the Legendre-Fenchel transformation for the distribution
$P^{(\text{inc})}_i$, defined by
\begin{equation}\label{Fenchel}
I_i(r)=\sup_{\lambda\in\mathbb{R}}\big(\lambda r-\log M_{{P^{(\text{inc})}_i}}(\lambda)\big),
\ r\in \mathbb{R}.
\end{equation}
We recall several facts about  $I_i$ that we will need and that hold under Assumption C \cite{DZ}.
\begin{equation}\label{fact1}
\begin{aligned}
 &I_i(r)<\infty \ \text{if and only if either}\ r\le \mu_i\ \text{and}\ P_i^{(\text{inc})}(-\infty,r])>0,\ \text{or}\\
 &     r> \mu_i\ \text{and}\ P_i^{(\text{inc})}([r,\infty))>0.
\end{aligned}
\end{equation}
Let $x_i^+=\sup\{x\in\mathbb{R}: I_i(x)<\infty\}$ and
 $x_i^-=\inf\{x\in\mathbb{R}: I_i(x)<\infty\}$.
Then
\begin{equation}\label{fact12}
\begin{aligned}
&I_i(\mu_i)=0;\\
&I_i:[\mu_i,x_i^+)\to[0,\infty) \ \text{is continuous and strictly increasing};\\
&I_i:(x_i^-,\mu_i]\to[0,\infty) \ \text{is continuous and strictly decreasing}.
\end{aligned}
\end{equation}
And we recall an elementary  large deviations result, a version of Cram\'er's theorem, that holds under Assumption C \cite{DZ}:
 if $S^{(i)}_n$ is the sum of $n$ IID random variables
distributed as $P^{(\text{inc})}_i$, and  $P^{(\text{inc})}_i$   satisfies Assumption C,   then
\begin{equation}\label{ld}
\begin{aligned}
&\lim_{n\to\infty}\frac1n\log P(\frac{S^{(i)}_n}n\ge r)=\lim_{n\to\infty}\frac1n\log P(\frac{S^{(i)}_n}n> r)=
-I_i(r),\ \mu_i\le r<x^+_i;\\
&\lim_{n\to\infty}\frac1n\log P(\frac{S^{(i)}_n}n\le r)=\lim_{n\to\infty}\frac1n\log P(\frac{S^{(i)}_n}n< r)=-I_i(r), \ x^-_i<r\le\mu_i.
\end{aligned}
\end{equation}

\medskip

We can now state the main result.
\begin{theorem}\label{DD1}
Let Assumptions A,B, and C hold.
Define
$$
\begin{aligned}
&\Lambda_0=I_0(r_1);\\
&\Lambda_i=I_i(r_{i+1})+\sum_{k=1}^i\big(I_k(r_k)-I_k(r_{k+1})\big),\ 1\le i\le l-1;\\
&\lambda_l=I_l(r_l)+\sum_{k=1}^{l-1}\big(I_k(r_k)-I_k(r_{k+1})\big).
\end{aligned}
$$
If $\max_{0\le i\le l}\Lambda_i$ occurs uniquely at $i=i_0$, then the speed $s^{D}(N,r_1^{(N)},\cdots, r_l^{(N)})$ of
   the delayed process
$\{X^{N;D}_n\}_{n=0}^\infty$ satisfies
$$
\lim_{N\to\infty}s^{D}(N,r_1^{(N)},\cdots, r_l^{(N)})=\mu_{i_0}.
$$
\end{theorem}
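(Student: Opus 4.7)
The strategy is to study the regime process $\xi_n\in\{0,1,\dots,l\}$, where $\xi_n=i$ means that the distribution $P_i^{(\text{inc})}$ is in use at step $n$. Because of the delay mechanism, while $\xi\equiv i$ the increments are IID with law $P_i^{(\text{inc})}$, and (after the initial $N$-step block) a regime change occurs at the first step at which the moving average of the most recent $N$ of these IID draws exits $[r_i,r_{i+1})$. By Cram\'er's theorem \eqref{ld}, together with Assumption A (which guarantees $r_i<\mu_i<r_{i+1}$ and hence strictly positive rate functions at the thresholds) and Assumption B (which guarantees finiteness), the per-step exit probabilities are
\[
p_i^+=\exp\bigl(-NI_i(r_{i+1})+o(N)\bigr),\qquad p_i^-=\exp\bigl(-NI_i(r_i)+o(N)\bigr),
\]
with only $p_0^+$ and $p_l^-$ being relevant at the boundary regimes.

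Since transitions only send $\xi_n$ to $\xi_n\pm 1$, the embedded sequence of distinct regimes forms a birth-death chain on $\{0,\dots,l\}$, and the asymptotic fraction of time spent in each regime is given by the stationary distribution $\pi$ of the associated continuous-time chain. Detailed balance $\pi_ip_i^+=\pi_{i+1}p_{i+1}^-$ yields
\[
\log\frac{\pi_i}{\pi_0}=-N\sum_{k=0}^{i-1}\bigl(I_k(r_{k+1})-I_{k+1}(r_{k+1})\bigr)+o(N).
\]
A short telescoping check using the identity $\Lambda_k-\Lambda_{k+1}=I_k(r_{k+1})-I_{k+1}(r_{k+1})$, valid for each $0\le k\le l-1$, shows that the sum equals $\Lambda_0-\Lambda_i$; hence $\pi_i\propto\exp(N\Lambda_i+o(N))$. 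The uniqueness of the maximiser $i_0$ then forces $\pi_{i_0}\to 1$ and $\pi_j\to 0$ for $j\ne i_0$ as $N\to\infty$. Combining this with the SLLN inside each sojourn---where the walk has mean increment $\mu_i$---and noting that the $\mu_i$ are bounded gives $\lim_{N\to\infty}s^D(N,r_1^{(N)},\dots,r_l^{(N)})=\mu_{i_0}$; Proposition \ref{speed} falls out along the way from the regenerative structure of the regime chain.

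The main technical obstacle is legitimising the birth-death reduction quantitatively. Consecutive sliding windows share $N-1$ samples, so the exit time from $[r_i,r_{i+1})$ is not geometric; one must show that its distribution has the Cram\'er exponential rate and that the exit direction has the correct asymptotic probability $p_i^+/(p_i^++p_i^-)$. A natural device is to subsample the moving average at times $N,2N,3N,\dots$ to recover near-independence between windows, and to bound the contribution of the intermediate steps via a union bound together with \eqref{ld}. A secondary point is to verify that the displacement accumulated during the (exponentially small in $N$) fraction of time the process spends outside regime $i_0$ is negligible for the speed; for this Assumption C provides more than enough control on increment moments.
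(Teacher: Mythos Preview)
Your plan is correct and is essentially the paper's own proof: the paper likewise reduces to a birth--death chain on the regime set $\{0,\dots,l\}$, extracts the speed from the regenerative representation \eqref{explicitspeed}, and shows via Cram\'er's theorem that the weight of regime $i$ is $\approx e^{N\Lambda_i}$. The only packaging difference is that the paper works with the embedded jump chain $\{Y^{N;D}_m\}$ and multiplies its invariant measure by the expected sojourn time $E\tau^{N,i}+N$ (Propositions~\ref{transest} and~\ref{exptau}), whereas you collapse these into a single ``continuous-time'' detailed balance $\pi_i p_i^+=\pi_{i+1}p_{i+1}^-$; the two are equivalent at the exponential scale. The one ingredient you will need that you have not named is the FKG inequality: the paper uses it (Propositions~\ref{key} and~\ref{keyprop1}) to obtain the matching lower bounds on the exit-direction probabilities, since the overlapping $N$-windows within a block are not independent and a union bound only gives one direction.
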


\bf \noindent Example.\rm\
The Legendre-Fenchel transformation of the Gaussian distribution $N(\mu,\sigma^2)$ is given
by $I(r)=\frac{(r-\mu)^2}{2\sigma^2}$.
Let $P^{(\text{inc)}}_i\sim N(\mu_i,\sigma_i^2)$. If
$$
\arg\max_{i\in\{0,\cdots, l\}}\Big[\frac{(r_{i+1}-\mu_i)^2}{\sigma_i^2}+\sum_{k=1}^i  \frac{(\mu_k-r_k)^2-(r_{k+1}-\mu_k)^2}{\sigma_k^2} \Big]
$$
occurs uniquely at $i_0$, then the limiting speed for the one-step delayed version is $\mu_{i_0}$.

\medskip

In the instantaneous version, the passage from one regime, say $i$, to a neighboring regime, say $i+1$, will frequently
be accompanied by a number of short time oscillations between the two regimes before the process securely ensconces itself
in the new regime $i+1$. Because of technical difficulties related to these oscillations, we can only
prove a theorem for the limiting speed of the instantaneous version in the case $l=1$.
\begin{theorem}\label{instant}
Let $l=1$ and let Assumptions A,B, and C hold.
The speed  of   the  instantaneous process $\{X^{N;I}_n\}_{n=0}^\infty$ almost surely satisfies
\begin{equation}\label{instantspeed}
\lim_{N\to\infty}\limsup_{n\to\infty}\frac{X^{N;I}_n}n=
\lim_{N\to\infty}\liminf_{n\to\infty}\frac{X^{N;I}_n}n=
\begin{cases} \mu_0,\ \text{if}\ I_0(r_1)>I_1(r_1);\\
\mu_1,\ \text{if}\ I_1(r_1)>I_0(r_1).\end{cases}
\end{equation}
\end{theorem}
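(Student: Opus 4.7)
Without loss of generality assume $I_0(r_1) > I_1(r_1)$, so that the target is $\mu_0$; the reverse case is symmetric. The mechanism is as follows. Starting from a ``clean'' state in which the $N$-step moving window consists of iid draws from $P_i^{(\text{inc})}$, the time until the moving average first crosses $r_1$ has logarithm asymptotic to $N I_i(r_1)$ by Cram\'er's theorem, while the post-switch oscillation interval (during which the process may alternate rapidly between regimes as the window rolls over) has length $O(N)$. Since $I_0(r_1) > I_1(r_1)$, regime-0 sojourns dominate exponentially, and the fraction of time spent in regime 1 is $O(\exp(-N(I_0(r_1)-I_1(r_1))))$, which vanishes as $N\to\infty$.

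Define renewal-style epochs as follows. Set $\tau_0 = N$ (so the initial window consists of iid draws from the fixed initial distribution) and inductively let $\tau_{k+1}$ be the first time $n > \tau_k$ at which the $N$-step window ending at $n$ consists entirely of draws from a single distribution, which we call $P_{\epsilon_{k+1}}^{(\text{inc})}$. Denote by $T_k$ the first time after $\tau_k$ at which a switch is triggered, and set $R_k = (\tau_{k+1}-\tau_k) - T_k$ to be the post-switch oscillation length. By the strong Markov property at $\tau_k$, the increments during $(\tau_k, \tau_k + T_k]$ are iid $P_{\epsilon_k}^{(\text{inc})}$, so applying Cram\'er's theorem (equation \eqref{ld}) to the moving average, together with a Borel--Cantelli calculation over disjoint length-$N$ blocks, yields
\begin{equation*}
\frac{1}{N}\log T_k \longrightarrow I_{\epsilon_k}(r_1) \quad \text{in probability as } N\to\infty.
\end{equation*}

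The main technical obstacle is to bound $R_k = O(N)$ with overwhelming probability. The plan is to exploit the fact that at any moment during the transient, the drift on the moving average is toward the mean of whichever regime is currently being used, namely $\mu_0 < r_1$ in regime 0 and $\mu_1 > r_1$ in regime 1, so in both cases the drift points \emph{away} from $r_1$. Within $O(N)$ further steps the window has rolled over completely, and the cumulative drift of size $\mu_1 - \mu_0$ (together with a Cram\'er/Hoeffding-type concentration bound on the sum of the new regime-randomized increments) pushes the window average strictly to one side of $r_1$ with probability $1-O(e^{-cN})$, producing a clean state. This is where the restriction $l=1$ is used in an essential way: with only two regimes the mixed transition window splits into two subsums whose contributions are straightforward to analyze, while for $l\ge 2$ transitional windows can mix three or more distributions and the direction of the drift is more delicate.

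Combining the two estimates via Borel--Cantelli shows that for any fixed $\epsilon > 0$, almost surely every epoch of type $\epsilon_k = i$ satisfies $e^{N(I_i(r_1)-\epsilon)} \le \tau_{k+1}-\tau_k \le e^{N(I_i(r_1)+\epsilon)}$ for all but finitely many $k$. Denoting by $T_n^{(i)}$ the total number of steps up to time $n$ at which the process uses $P_i^{(\text{inc})}$, a comparison of epoch lengths gives $T_n^{(1)}/n \le (1+o(1))\exp(-N(I_0(r_1)-I_1(r_1)-2\epsilon))$, and choosing $\epsilon < (I_0(r_1)-I_1(r_1))/2$ shows this bound vanishes as $N\to\infty$. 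A final conditional-SLLN argument yields $X_n^{N;I} = \mu_0 T_n^{(0)} + \mu_1 T_n^{(1)} + o(n)$ almost surely, and taking $n\to\infty$ followed by $N\to\infty$ delivers the claimed limiting speed $\mu_0$.
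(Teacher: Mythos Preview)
Your overall strategy---show that the fraction of time the instantaneous process spends in the ``wrong'' regime vanishes as $N\to\infty$---matches the paper's, but the argument as written has real gaps that are not merely cosmetic.

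\medskip

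\textbf{The Borel--Cantelli step fails.} For a fixed history window $N$, the epoch lengths $\tau_{k+1}-\tau_k$ (indexed by $k$) form a stationary sequence, and the event $\{\tau_{k+1}-\tau_k < e^{N(I_i(r_1)-\epsilon)}\}$ has a strictly positive probability that does not depend on $k$. Since these events are (at least one-dependent) and have constant nonzero probability, infinitely many of them occur almost surely. So the statement ``for all but finitely many $k$'' is false for every fixed $N$. What you actually need is an SLLN/ergodic-theorem statement about the \emph{average} epoch length, not a pointwise bound on each epoch; the conclusion $T_n^{(1)}/n \to (\text{small})$ follows from comparing \emph{expected} sojourn lengths, not from Borel--Cantelli.

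\medskip

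\textbf{The oscillation bound $R_k=O(N)$ is not established.} Your drift heuristic says the current regime always pushes the moving average away from $r_1$, but the moving average is also shedding old increments from the window, and during the transient those dropped increments come from a \emph{mixture} of the two regimes depending on the recent switching history. In particular, after a quick $0\to 1\to 0$ flip the increments being dropped are still regime-0 draws and the net drift on the moving average is essentially zero, so nothing in your sketch prevents many further crossings. Even granting a bound $P(R_k>CN)\le e^{-cN}$, you would still need to control $E R_k$ (or the tail more carefully) to run the SLLN argument, and that is not addressed.

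\medskip

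\textbf{How the paper sidesteps these issues.} The paper never waits for a ``clean'' pure-window state and never analyzes the oscillation phase. Instead it bounds each session time directly by stochastic comparison with the delayed process: writing $T_m^{N,i}$ for the length of the $m$th session in mode $i$, one has $T_m^{N,0}$ stochastically dominated by $N+\tau^{N,0}$ trivially, and---this is the key trick---$T_m^{N,1}$ stochastically dominates $(N+\tau^{N,1})\,\mathrm{Ber}(c)$, where
\[
c \;=\; P\Bigl(\tfrac{S^{(1)}_n}{n}\ge r_1 \ \text{for all}\ n\ge 1\Bigr) \;>\;0
\]
(positivity follows from the SLLN together with the FKG inequality). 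The point is that with probability at least $c$ the first $N$ regime-1 steps by themselves keep the running averages above $r_1$, after which the window is pure and one is back in the delayed setting; FKG ensures this conditioning only helps. A straightforward SLLN on the i.i.d.\ bounding variables then gives
\[
\liminf_{m\to\infty}\frac{\sum_{k\le m}T^{N,1}_k}{\sum_{k\le m}(T^{N,1}_k+T^{N,0}_k)}
\;\ge\;
\frac{c\,(N+E\tau^{N,1})}{c\,(N+E\tau^{N,1})+E\tau^{N,0}}
\;\longrightarrow\; 1
\]
as $N\to\infty$, using $E\tau^{N,i}\approx e^{NI_i(r_1)}$ from Proposition~\ref{exptau}. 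This completely avoids both the Borel--Cantelli issue and any analysis of the post-switch transient.
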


In the instantaneous version, define the $N$-dimensional differences  process $\{Z_n^{N;I}\}_{n=0}^\infty$ by
$$
Z^{N;I}_n=(X^{N;I}_{n+1}-X^{N;I}_n, X^{N;I}_{n+2}-X^{N;I}_{n+1},\cdots, X^{N;I}_{n+N}-X^{N;I}_{n+N-1}).
$$
It is easy to see that this is a Markov process.
In \cite{Pin} we studied the speed of the instantaneous version $\{X^{N;I}_n\}_{n=0}^\infty$ under the assumption that the increment distributions
$\{P_i^{(\text{inc})}\}_{i=0}^l$ are all Bernoulli distributions on $\{-1,1\}$; $P^{(\text{inc})}_i\sim\text{Ber}(p_i)$,
so $\mu_i=2p_i-1$. Thus,
those processes lived on $\mathbb{Z}$ and  made only nearest-neighbor jumps.
In that version, we were able to calculate explicitly the invariant measure $\pi^N$ (defined on $\{-1,1\}^N$) of the differences process $\{Z^{N;I}_n\}_{n=0}^\infty$, and this allowed us to  obtain an explicit formula
for the speed $s^I(N,r_1,\cdots, r_l)$. What made the explicit calculation of the invariant distribution possible was the fact that
$\pi^N$ turned out to be constant on the level sets $\{z\in\{-1,1\}^N:\sum_{i=1}^Nz_i=M\}$, for any $M$.
Even in the case that the increment distributions $\{P_i^{(\text{inc})}\}_{i=0}^l$ are all supported on a fixed set
of size three, the explicit calculation of the invariant measure $\pi^N$ of the differences process does not seem possible in general.
Exploiting this explicit formula for the speed $s^I(N,r_1^{(N)},\cdots, r_l^{(N)})$ in the case of Bernoulli increment distributions,
in \cite{Pin} we proved the equivalent of  Theorem \ref{DD1} for the instantaneous version.
The expressions $\{\Lambda\}_{i=0}^l$ in the case of these Bernoulli distributions appear there
in explicit form, but their connection to the Legendre-Fenchel transformation is not mentioned.
The delicate borderline cases, when $\max_{0\le i\le l}\Delta_i$ does not occur uniquely
were also resolved, in each case of which  the limiting speed was  a certain linear combination of the speeds $\{\mu_i\}_{i=0}^l$.
In this paper, we work on exponential scale, via \eqref{ld}, so we cannot handle the borderline cases.

We now turn to the organization of the rest of the paper.
 Theorem \ref{DD1} is proved very quickly  in section \ref{DD1proof}, but this is only after
  a number of  technical propositions are proved in the rather long section \ref{seriesprop}.
 As already noted, the proof of Proposition \ref{speed}  is embedded in the proof of Theorem \ref{DD1}.
 The proof of Theorem \ref{instant} is given in section \ref{instantproof}.

 Here is a rough outline of the idea of the proof of Theorem \ref{DD1}.
Let $\{Y^{N;D}_m\}_{m=0}^\infty$ denote the Markov chain (more specifically, birth and death chain) on $\{0,\cdots, l\}$
that follows the changes of the increment distribution utilized by
the delayed version $\{X_n^{N;D}\}_{n=0}^\infty$ of the  random walk reinforced by its recent history.
Thus, $Y^{N;D}_0=i_0$, since the process $\{X_n^{N;D}\}_{n=0}^\infty$ starts  out using the increment distribution
$P_{i_0}^{(\text{inc})}$. If the first time the process $\{X_n^{N;D}\}_{n=0}^\infty$ changes its increment distribution,
it switches from distribution $P_{i_0}^{(\text{inc})}$ to distribution
$P_j^{(\text{inc})}$ ($j=i_0+1$ or $j=i_0-1$), then $Y^{N;D}_1=j$.
In general, $Y^{N;D}_m=k$, if after switching increment distribution $m$ times, the process
$\{X_n^{N;D}\}_{n=0}^\infty$ is using the increment distribution $P_k^{(\text{inc})}$.
(This Markov chain is introduced after the proof of  Proposition \ref{?} in section \ref{seriesprop}.)
Propositions \ref{key} and \ref{keyprop1}, the first two propositions of section \ref{seriesprop},
 are the key technical results that are used to prove Proposition \ref{transest},
 which   gives tight exponential estimates as $N\to\infty$ on the transition probabilities
of the Markov chain  $\{Y^{N;D}_m\}_{m=0}^\infty$.
Since $\{Y^{N;D}_m\}_{m=0}^\infty$ is a birth and death chain,
its invariant distribution can be written down explicitly in terms of its transition probabilities; thus we obtain
tight exponential estimates on the behavior of  this invariant measure as $N\to\infty$.
Propositions \ref{exptau} and \ref{MG}  calculate respectively  the exponential order as $N\to\infty$ of the expected number of steps made by and   the expected
 distance travelled
by the  delayed version of the random walk reinforced by its recent history
between the time it enters a particular increment distribution regime until it switches to a different increment distribution regime.
The proof of Theorem \ref{DD1} in section \ref{DD1proof} follows easily from Propositions \ref{exptau} and \ref{MG} along with the asymptotic behavior of the invariant measure
for the Markov chain  $\{Y^{N;D}_m\}_{m=0}^\infty$.

\section{A series of propositions}\label{seriesprop}
We will use the following  notation throughout the paper.
$$
\begin{aligned}
&a_N\approx b_N\ \text{means} \
\lim_{N\to\infty}\frac1N\log a_N=\lim_{N\to\infty}\frac1N\log b_N;\\
&a_N\lessapprox b_N \ \text{means}\
\limsup_{N\to\infty}\frac1N\big(\log a_N-\log b_N\big)\le 0;\\
&a_N\lessapprox\lessapprox b_N \ \text{means}\
\limsup_{N\to\infty}\frac1N\big(\log a_N-\log b_N\big)< 0.
\end{aligned}
$$
The random walk with increment distribution $P^{(\text{inc})}_i$ will be denoted by $\{S_n^{(i)}\}_{n=0}^\infty$.
Also, we will use the notation
$$
S^{(i)}_{j,k}=S_k^{(i)}-S^{(i)}_j,\ \text{for}\ 0\le j<k.
$$

In order to reduce the cumbersome notation, we define as follows $Z^{N,i}_n$, for $n\ge1$ and  $1\le i\le l-1$,
with  $1\le j\le i$ and  $i+1\le k\le l$:
\begin{equation}\label{Z}
\begin{aligned}
&Z^{N,i}_n=1,\ \text{if}\\
&\max\big(\frac{S^{(i)}_{(n-1)N,nN}}N,\frac{S^{(i)}_{(n-1)N+1,nN+1}}N,\cdots,\frac{S^{(i)}_{(n-1)N+N-1,nN+N-1}}N\big)\ge r_{i+1}\ \text{and}\\
&\min\big(\frac{S^{(i)}_{(n-1)N,nN}}N,\frac{S^{(i)}_{(n-1)N+1,nN+1}}N,\cdots,\frac{S^{(i)}_{(n-1)N+N-1,nN+N-1}}N\big)\ge r_i;\\
&Z^{N,i}_n=-1,\ \text{if}\\
&\max\big(\frac{S^{(i)}_{(n-1)N,nN}}N,\frac{S^{(i)}_{(n-1)N+1,nN+1}}N,\cdots,\frac{S^{(i)}_{(n-1)N+N-1,nN+N-1}}N\big)< r_{i+1}\ \text{and}\\
&\min\big(\frac{S^{(i)}_{(n-1)N,nN}}N,\frac{S^{(i)}_{(n-1)N+1,nN+1}}N,\cdots,\frac{S^{(i)}_{(n-1)N+N-1,nN+N-1}}N\big)
< r_i;\\
&Z^{N,i}_n=-11,\ \text{if}\\
&\max\big(\frac{S^{(i)}_{(n-1)N,nN}}N,\frac{S^{(i)}_{(n-1)N+1,nN+1}}N,\cdots,\frac{S^{(i)}_{(n-1)N+N-1,nN+N-1}}N\big)
\ge r_{i+1}\ \text{and}\\
&\min\big(\frac{S^{(i)}_{(n-1)N,nN}}N,\frac{S^{(i)}_{(n-1)N+1,nN+1}}N,\cdots,\frac{S^{(i)}_{(n-1)N+N-1,nN+N-1}}N\big)< r_i;\\
&Z^{N,i}_n=0,\ \text{otherwise}.
\end{aligned}
\end{equation}
Note that $\{Z^{N,i}_n\}_{n=1}^\infty$ are identically distributed, and that
each of $\{Z^{N,i}_{2n}\}_{n=1}^\infty$   and $\{Z^{N,i}_{2n-1}\}_{n=1}^\infty$ is an independent sequence.

We begin with two key propositions with rather involved proofs. These propositions  serve as a basis for the rest of the results in this section.
For both  of  them, we will need the FKG correlation inequality in the following form.
Let $W=(W_1,\cdots, W_M)$ be an $\mathbb{R}^M$-valued random variable. Let $f,g:\mathbb{R}^M\to\mathbb{R}$. Then
\begin{equation}\label{FKG}
\begin{aligned}
&Ef(W)g(W)\ge Ef(W)Eg(W), \ \text{if} \ f \ \text{and}\ g\ \text{are either both increasing}\\
& \text{or both decreasing in each of their} \ M\ \text{variables};\\
&Ef(W)h(W)\le Ef(W)Eh(W),\ \text{if one of } \ f \ \text{and}\ g\ \text{is increasing and the other  }\\
&\text{one is decreasing in each of its} \ M\ \text{variables}.
\end{aligned}
\end{equation}
See, for example, \cite{F}.
\begin{proposition}\label{key}
Let $1\le i\le l-1$. Then
\begin{equation}\label{keyequ}
\begin{aligned}
&P(Z^{N,i}_1=1)\approx e^{-NI_i(r_{i+1})};\\
&P(Z^{N,i}_1=-1)\approx e^{-NI_i(r_i)};\\
&P(Z^{N,i}_1=-11)\lessapprox e^{-N\big(I_i(r_i)+I_i(r_{i+1})\big)}.
\end{aligned}
\end{equation}
\end{proposition}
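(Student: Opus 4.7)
The plan is to combine Cram\'er's theorem \eqref{ld} with the FKG inequality \eqref{FKG}, applied to the underlying $2N-1$ i.i.d.\ increments $X_1,\ldots,X_{2N-1}$ of the walk $\{S^{(i)}_n\}$. Each window sum $S^{(i)}_{j,j+N}=X_{j+1}+\cdots+X_{j+N}$, for $0\le j\le N-1$, is a coordinatewise nondecreasing function of $(X_1,\ldots,X_{2N-1})$, so $\{S^{(i)}_{j,j+N}/N\ge r\}$ is an increasing event and $\{S^{(i)}_{j,j+N}/N<r\}$ a decreasing event in the product partial order. Define
\begin{align*}
&A^+=\bigcup_{j}\bigl\{S^{(i)}_{j,j+N}/N\ge r_{i+1}\bigr\},\qquad B^+=\bigcap_{j}\bigl\{S^{(i)}_{j,j+N}/N\ge r_i\bigr\},\\
&A^-=\bigcap_{j}\bigl\{S^{(i)}_{j,j+N}/N< r_{i+1}\bigr\},\qquad B^-=\bigcup_{j}\bigl\{S^{(i)}_{j,j+N}/N< r_i\bigr\},
\end{align*}
which are increasing, increasing, decreasing, and decreasing respectively. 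A union bound together with \eqref{ld} and Assumption A (which guarantees $I_i(r_i),I_i(r_{i+1})>0$ via \eqref{fact12}) shows $P(B^+)\to 1$ and $P(A^-)\to 1$, so on the exponential scale these two factors count as $1$.

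For the first two estimates observe that $\{Z^{N,i}_1=1\}=A^+\cap B^+$ and $\{Z^{N,i}_1=-1\}=A^-\cap B^-$, each the intersection of two monotone events of the same type. The inequality $P(Z^{N,i}_1=1)\le P(A^+)\le N P(S^{(i)}_{0,N}/N\ge r_{i+1})$ together with \eqref{ld} produces $P(Z^{N,i}_1=1)\lessapprox e^{-NI_i(r_{i+1})}$, while FKG yields the matching lower bound $P(Z^{N,i}_1=1)\ge P(A^+)P(B^+)\ge P(S^{(i)}_{0,N}/N\ge r_{i+1})P(B^+)$, and since $P(B^+)\approx 1$ this closes the gap to $\approx$. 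The proof of $P(Z^{N,i}_1=-1)\approx e^{-NI_i(r_i)}$ runs in parallel, with the decreasing events $A^-,B^-$ replacing $A^+,B^+$ and $I_i(r_i)$ replacing $I_i(r_{i+1})$.

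The third estimate uses FKG in the opposite direction: $\{Z^{N,i}_1=-11\}=A^+\cap B^-$ is the intersection of an increasing and a decreasing event, so \eqref{FKG} gives $P(Z^{N,i}_1=-11)\le P(A^+)P(B^-)$, and a union bound plus \eqref{ld} dominates each factor by $N\exp(-N(I_i(r_{i+1})-o(1)))$ and $N\exp(-N(I_i(r_i)-o(1)))$ respectively, yielding $\lessapprox e^{-N(I_i(r_i)+I_i(r_{i+1}))}$. The only subtlety is the monotonicity bookkeeping: one must pair same-type monotone events when bounding an intersection from below via FKG, and opposite-type pairs when bounding an intersection from above; once that is arranged, everything reduces to a union bound plus \eqref{ld}, and the sub-exponential factors (a factor of $N$ from the union bound, and $P(B^+)$ or $P(A^-)$ tending to $1$) are absorbed by the $\approx$ and $\lessapprox$ notation.
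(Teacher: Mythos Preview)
Your proof is correct and follows essentially the same approach as the paper: both combine a union bound with Cram\'er's theorem \eqref{ld} for the upper bounds and invoke the FKG inequality \eqref{FKG} (applied to the $2N-1$ i.i.d.\ increments, via monotone indicator functions of the window sums) for the lower bound on $P(Z^{N,i}_1=1)$ and the upper bound on $P(Z^{N,i}_1=-11)$. The only cosmetic difference is that the paper writes the FKG step in conditional-probability form, e.g.\ $P(B^+\mid A^+)\ge P(B^+)$, whereas you use the equivalent product form $P(A^+\cap B^+)\ge P(A^+)P(B^+)$; the arguments are otherwise identical.
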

\begin{proof}
We will prove the first and third formulas in  \eqref{keyequ}; the second one is proved analogous to the first.
For the first formula,
 we may assume that
$I_i(r_{i+1})<\infty$, since otherwise,  by \eqref{fact1}, the formula clearly holds.
By \eqref{ld}, we have
\begin{equation}\label{easyupper}
\begin{aligned}
&P(Z^{N,i}_1=1)\le P\big(\max\big(\frac{S^{(i)}_{0,N}}N,\frac{S^{(i)}_{1,N+1}}N,\cdots,\frac{S^{(i)}_{N-1,2N-1}}N\big)
\ge r_{i+1}\big)\le\\
&\sum_{j=0}^{N-1}P(\frac{S^{(i)}_{j,N+j}}N\ge r_{i+1})\approx N e^{-NI_i(r_{i+1})}\approx e^{-NI_i(r_{i+1})}.
\end{aligned}
\end{equation}
Also
\begin{equation}\label{lower}
\begin{aligned}
&P(Z^{N,i}_1=1)= \\
&P\big(\max\big(\frac{S^{(i)}_{0,N}}N,\frac{S^{(i)}_{1,N+1}}N,\cdots,\frac{S^{(i)}_{N-1,2N-1}}N\big)\ge r_{i+1}\big)\times\\
&P\big(\min\big(\frac{S^{(i)}_{0,N}}N,\frac{S^{(i)}_{1,N+1}}N,\cdots,\frac{S^{(i)}_{N-1,2N-1}}N\big)\ge r_i|
\max\big(\frac{S^{(i)}_{0,N}}N,\frac{S^{(i)}_{1,N+1}}N,\cdots,\frac{S^{(i)}_{N-1,2N-1}}N\big)\ge r_{i+1}\big).
\end{aligned}
\end{equation}
By \eqref{ld},
\begin{equation}\label{trivlower}
P\big(\max\big(\frac{S^{(i)}_{0,N}}N,\frac{S^{(i)}_{1,N+1}}N,\cdots,\frac{S^{(i)}_{N-1,2N-1}}N\big)\ge r_{i+1}\big)\ge
P(\frac{S^{(i)}_{0,N}}N\ge r_{i+1})\approx e^{-NI_i(r_{i+1})}.
\end{equation}
The following inequality follows from the FKG correlation inequality \eqref{FKG}.
\begin{equation}\label{posassoc}
\begin{aligned}
&P\big(\min\big(\frac{S^{(i)}_{0,N}}N,\frac{S^{(i)}_{1,N+1}}N,\cdots,\frac{S^{(i)}_{N-1,2N-1}}N\big)\ge r_i|
\max\big(\frac{S^{(i)}_{0,N}}N,\frac{S^{(i)}_{1,N+1}}N,\cdots,\frac{S^{(i)}_{N-1,2N-1}}N\big)\ge r_{i+1}\big)\ge\\
&P\big(\min\big(\frac{S^{(i)}_{0,N}}N,\frac{S^{(i)}_{1,N+1}}N,\cdots,\frac{S^{(i)}_{N-1,2N-1}}N\big)\ge r_i).\\
\end{aligned}
\end{equation}
To see that \eqref{posassoc} follows from \eqref{FKG}, let
$x=(x_1,\cdots, x_{2N-1})\in\mathbb{R}^{2N-1}$, let
$s_{i,j}=\sum_{k=i+1}^jx_k$, for $0\le i<j\le 2N-1$, and define
$$
\begin{aligned}
f(x)=1_{\min\big(\frac{s_{0,N}}N,\frac{s_{1,N+1}}N,\cdots,\frac{s_{N-1,2N-1}}N\big)\ge r_i};\\
g(x)=1_{\max\big(\frac{s_{0,N}}N,\frac{s_{1,N+1}}N,\cdots,\frac{s_{N-1,2N-1}}N\big)\ge r_{i+1}}.
\end{aligned}
$$
 Denote the increments of the random walk $\{S_n^{(i)}\}_{n=0}^\infty$ by $\{W^{(i)}_n\}_{n=1}^\infty$; that is,
$S_n^{(i)}=\sum_{k=1}^nW^{(i)}_k$.
Let $W^{(i)}=(W^{(i)}_1,\cdots, W^{(i)}_{2N-1})$.
Then \eqref{posassoc} is equivalent to $Ef(W^{(i)})g(W^{(i)})\ge Ef(W^{(i)})Eg(W^{(i)})$, and this latter inequality follows from \eqref{FKG}.

From \eqref{posassoc} and
\eqref{ld}  we have
\begin{equation}\label{condlower}
\begin{aligned}
&P\big(\min\big(\frac{S^{(i)}_{0,N}}N,\frac{S^{(i)}_{1,N+1}}N,\cdots,\frac{S^{(i)}_{N-1,2N-1}}N\big)\ge r_i|
\max\big(\frac{S^{(i)}_{0,N}}N,\frac{S^{(i)}_{1,N+1}}N,\cdots,\frac{S^{(i)}_{N-1,2N-1}}N\big)\ge r_{i+1}\big)\ge\\
&1-P\big(\min\big(\frac{S^{(i)}_{0,N}}N,\frac{S^{(i)}_{1,N+1}}N,\cdots,\frac{S^{(i)}_{N-1,2N-1}}N\big)< r_i)\ge
1-NP(\frac{S^{(i)}_{0,N}}N<  r_i)\approx1,\ \text{as}\ N\to\infty.
\end{aligned}
\end{equation}
The first formula in \eqref{keyequ}   now follows from \eqref{easyupper}-\eqref{condlower}.

We now turn to the third formula in  \eqref{keyequ}. We have
\begin{equation}\label{thirdform1}
\begin{aligned}
&P(Z^{N,i}_1=-11)=P\big(\max\big(\frac{S^{(i)}_{0,N}}N,\cdots,\frac{S^{(i)}_{N-1,2N-1}}N\big)\ge r_{i+1}\big)\times\\
&P\big(\min\big(\frac{S^{(i)}_{0,N}}N,\frac{S^{(i)}_{1,N+1}}N,\cdots,\frac{S^{(i)}_{N-1,2N-1}}N\big)< r_i|
\max\big(\frac{S^{(i)}_{0,N}}N,\frac{S^{(i)}_{1,N+1}}N,\cdots,\frac{S^{(i)}_{N-1,2N-1}}N\big)\ge r_{i+1}\big).
\end{aligned}
\end{equation}
By \eqref{ld},
\begin{equation}\label{thirdform2}
\begin{aligned}
&P\big(\max\big(\frac{S^{(i)}_{0,N}}N,\cdots,\frac{S^{(i)}_{N-1,2N-1}}N\big)\ge r_{i+1}\big)\le NP(\frac{S^{(i)}_{0,N}}N\ge r_{i+1})\approx e^{-NI_i(r_{i+1})}.
\end{aligned}
\end{equation}
The  first inequality below follows from the FKG inequality \eqref{FKG} similarly to the way \eqref{posassoc} followed from \eqref{FKG}.
  Using this  and \eqref{ld}, we have
\begin{equation}\label{thirdform3}
\begin{aligned}
&P\big(\min\big(\frac{S^{(i)}_{0,N}}N,\frac{S^{(i)}_{1,N+1}}N,\cdots,\frac{S^{(i)}_{N-1,2N-1}}N\big)< r_i|
\max\big(\frac{S^{(i)}_{0,N}}N,\frac{S^{(i)}_{1,N+1}}N,\cdots,\frac{S^{(i)}_{N-1,2N-1}}N\big)\ge r_{i+1}\big)\le\\
&P\big(\min\big(\frac{S^{(i)}_{0,N}}N,\frac{S^{(i)}_{1,N+1}}N,\cdots,\frac{S^{(i)}_{N-1,2N-1}}N\big)< r_i)\le
NP(\frac{S^{(i)}_{0,N}}N\le r_i)\approx e^{-NI_i(r_i)}.
\end{aligned}
\end{equation}
The third formula in \eqref{keyequ} follows from \eqref{thirdform1}-\eqref{thirdform3}.
\end{proof}

\begin{proposition}\label{keyprop1}
Let $1\le i\le l-1$.
Define
\begin{equation}\label{tau}
\tau^{N,i}=\inf\big\{n\ge0:\frac{S^{(i)}_{n,N+n}}N\not\in[r_i,r_{i+1}) \}.
\end{equation}
Then
\begin{equation}\label{tauformula}
\begin{aligned}
&P(\frac{S^{(i)}_{\tau^{N,i},N+\tau^{N,i}}}N< r_i)\approx e^{-N\big(I_i(r_i)-I_i(r_{i+1})\big)^+};\\
&P(\frac{S^{(i)}_{\tau^{N,i},N+\tau^{N,i}}}N\ge r_{i+1})\approx e^{-N\big(I_i(r_{i+1})-I_i(r_i)\big)^+}.
\end{aligned}
\end{equation}
\end{proposition}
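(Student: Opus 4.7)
The plan reduces the computation to the block variables $\{Z^{N,i}_n\}_{n=1}^\infty$ introduced in \eqref{Z}, drawing on Proposition \ref{key}. Partition the window-starting positions into blocks of length $N$, so that block $n$ comprises $(n-1)N,\ldots,nN-1$. A direct inspection shows that $\tau^{N,i}$ must lie in the block $n^{*}:=\min\{n\ge1:Z^{N,i}_n\neq0\}$: every earlier block has $Z^{N,i}=0$, for otherwise a window in it would already have exited $[r_i,r_{i+1})$ in contradiction to the minimality of $\tau^{N,i}$, while block $n^{*}$ by construction contains such an exit. Moreover the exit direction at $\tau^{N,i}$ is forced by the value $Z^{N,i}_{n^{*}}$: upward if it equals $1$, downward if it equals $-1$, and ambiguous (though with negligibly small probability) if it equals $-11$.

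Writing $p_{+}=P(Z^{N,i}_1=1)$, $p_{-}=P(Z^{N,i}_1=-1)$, and $p_{\pm}=P(Z^{N,i}_1=-11)$, the assertion reduces to showing
$$
P\bigl(Z^{N,i}_{n^{*}}=-1\bigr)\approx\frac{p_{-}}{p_{+}+p_{-}},
$$
because dividing numerator and denominator by $\max(p_{+},p_{-})$ converts the right-hand side into $e^{-N(I_i(r_i)-I_i(r_{i+1}))^{+}}$ on the $\approx$ scale, by Proposition \ref{key}. I write
$$
P\bigl(Z^{N,i}_{n^{*}}=-1\bigr)=\sum_{n\ge1}P\bigl(Z^{N,i}_m=0\ \forall m<n,\ Z^{N,i}_n=-1\bigr)
$$
and, in each summand, drop the constraints at indices of parity opposite to $n$. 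Because same-parity $Z^{N,i}$'s are i.i.d.\ (the remark following \eqref{Z}), the remaining constraints factor, giving an upper bound of the form $(1-p_{+}-p_{-}-p_{\pm})^{\lfloor n/2\rfloor}p_{-}$ up to a bounded prefactor. Summing the resulting geometric series yields $P(Z^{N,i}_{n^{*}}=-1)\lessapprox p_{-}/(p_{+}+p_{-})$. A matching lower bound is obtained by restricting to the explicit sub-event $\{Z^{N,i}_{2k-1}=0\ \forall k<k_0,\ Z^{N,i}_{2k_0-1}=-1\}$, combined with an FKG-type control using \eqref{FKG} (exactly as \eqref{posassoc} was derived in the proof of Proposition \ref{key}) to rule out earlier even-block events. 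The second formula in \eqref{tauformula} is proved by the symmetric argument with the roles of $\pm$ interchanged.

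The principal obstacle is the one-step dependence between consecutive $Z^{N,i}_n$'s -- they share $N-1$ common increments -- which prevents treating the full sequence as i.i.d. Thinning to a single-parity subsequence at the cost of a bounded combinatorial prefactor sidesteps this, since such prefactors are invisible on the logarithmic scale $\tfrac1N\log(\cdot)$ which drives $\approx$. A subsidiary concern is the case $Z^{N,i}_{n^{*}}=-11$, in which the exit direction within block $n^{*}$ is not determined by the $Z$-value alone; but the total contribution of this event is controlled in rate by $p_{\pm}/(p_{+}+p_{-})\lessapprox e^{-N\max(I_i(r_i),I_i(r_{i+1}))}$, and since $\max(I_i(r_i),I_i(r_{i+1}))\ge(I_i(r_i)-I_i(r_{i+1}))^{+}$ holds unconditionally, this contribution is strictly dominated by the target rate and vanishes in the $\approx$-limit.
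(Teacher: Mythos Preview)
Your approach is essentially the same as the paper's: reduce to the block variables $Z^{N,i}_n$, use Proposition~\ref{key} for the block probabilities, exploit the i.i.d.\ structure of same-parity blocks for the upper bound, and invoke the FKG inequality for the lower bound. Your identification of $n^{*}$ with the block containing $\tau^{N,i}$, the upper-bound argument (dropping opposite-parity constraints and summing a geometric series), and the treatment of the $-11$ contribution are all correct and match the paper's reasoning; the paper phrases the upper bound via the containment $\{\text{exit below}\}\subset\{Z_{\sigma^{(e)}}\in\{-1,-11\}\}\cup\{Z_{\sigma^{(o)}}\in\{-1,-11\}\}$, which amounts to the same thing.

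There is, however, a genuine technical gap in your lower-bound sketch. The sub-event $\{Z^{N,i}_{2k-1}=0\ \forall k<k_0,\ Z^{N,i}_{2k_0-1}=-1\}$ is \emph{not} monotone in the underlying increments: the constraint $Z^{N,i}_{2k-1}=0$ forces every window in that block to lie in $[r_i,r_{i+1})$, an intersection of an increasing event ($\ge r_i$) and a decreasing event ($<r_{i+1}$). Consequently FKG cannot be applied to decouple this event from any ``no bad even block'' event, and the analogy with \eqref{posassoc} breaks down. The paper circumvents this by discarding the lower constraint and working instead with the \emph{decreasing} event $\bigcap_{n=1}^{2M}\{Z^{N,i}_n\in\{0,-1\}\}$ (equivalently, ``all window averages $<r_{i+1}$'') intersected with the decreasing event $\bigcup_{n=1}^{M}\{Z^{N,i}_{2n}=-1\}$; FKG then applies cleanly between these two, and choosing $M=[1/b_N]$ with $b_N=P(Z^{N,i}_1\in\{1,-11\})$ gives the matching lower bound. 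Your sketch is easily repaired along these lines, but as written the FKG step does not go through.
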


\begin{proof}
By  Assumption B, it follows that $\tau^{N,i}<\infty$ a.s. Also, by Assumption B and \eqref{fact1}, it follows that
 $I_i(r_k)$ and $I_i(r_j)$ are finite.
Assume without loss of generality that $I_i(r_i)\ge I_i(r_{i+1})$. If $I_i(r_i)> I_i(r_{i+1})$,
then it suffices to prove the first formula in \eqref{tauformula} since the two terms on the left hand side
of \eqref{tauformula} add up to one.
If $I_i(r_j)= I_i(r_k)$, then the proofs of the two formulas in \eqref{tauformula} are almost identical. Thus, in this case too we will prove only the first
formula.
Suppressing the dependence on $N$, let
\begin{equation}\label{stopping}
\sigma_i^{(e)}=\inf\big\{2n\ge 2: Z_{2n}^{N,i}\neq0\}\big\},\ \ \sigma_i^{(o)}=\inf\big\{2n-1\ge 1, Z_{2n-1}^{N,i}\neq0\}\big\}.
\end{equation}
Using Proposition \ref{key} and the fact that
each of $\{Z^{N,i}_{2n}\}_{n=1}^\infty$   and $\{Z^{N,i}_{2n-1}\}_{n=1}^\infty$ is an IID sequence,
it follows that
\begin{equation}\label{likethmform}
\begin{aligned}
&P( Z_{\sigma_{i}^{(*)}}^{N,i}=-1)\approx e^{-N\big(I_i(r_i)-I_i(r_{i+1})\big)},\\
&P( Z_{\sigma_{i}^{(*)}}^{N,i}=-11)\lessapprox e^{-N I_i(r_i)},\\
&\text{both when}\ \sigma_{i}^{(*)}=\sigma_{i}^{(e)}\ \text{and when}\ \sigma_{i}^{(*)}=\sigma_{i}^{(o)}.
\end{aligned}
\end{equation}
Now
$$
\Big\{\frac{S^{(i)}_{\tau^{N,i},N+\tau^{N,i}}}N< r_i\Big\}\subset\big\{Z^{N,i}_{\sigma_{i}^{(e)}}\in\{-1,-11\}\big\}
\cup\big\{Z^{N,i}_{\sigma_{i}^{(0)}}\in\{-1,-11\}\big\};
$$
thus, it follows from \eqref{likethmform} that
\begin{equation}\label{upperbound}
P\big(\frac{S^{(i)}_{\tau^{N,i},N+\tau^{N,i}}}N< r_i\big)\lessapprox e^{-N\big(I_i(r_i)-I_i(r_{i+1})\big)}.
\end{equation}

To prove an inequality in the other direction, let $a_N=P(Z_1^{(N,i)}=-1)$ and
$b_N=P(Z_1^{(N,i)}\in\{1,-11\})$, where we have suppressed the dependence on $i$.
From Proposition \ref{key},
\begin{equation}\label{aNbN}
a_N\approx e^{-NI_i(r_i)},\ \ b_N\approx e^{-NI_i(r_{i+1})}.
\end{equation}
We have for any positive integer $M$,
\begin{equation}
\big\{\frac{S^{(i)}_{\tau^{N,i},N+\tau^{N,i}}}N< r_i\Big\}\supset \big(\cap_{n=1}^{2M}
\big\{Z_n^{N,i}\in\{0,-1\}\big\}\big)\cap\big(\cup_{n=1}^M\{Z_{2n}^{N,i}=-1\}\big).
\end{equation}
Thus,
\begin{equation}\label{estimate}
\begin{aligned}
&P(\frac{S^{(i)}_{\tau^{N,i},N+\tau^{N,i}}}N< r_i)\ge
P(\cup_{n=1}^M\{Z_{2n}^{N,i}=-1\})\times\\
&P\big(\cap_{n=1}^{2M}
\big\{Z_n^{N,i}\in\{0,-1\}\big\}\big|\cup_{n=1}^M\{Z_{2n}^{N,i}=-1\}\big).
\end{aligned}
\end{equation}
Since $\{Z_{2n}^{N,i}\}_{n=1}^M$
are IID, it follows that
\begin{equation}\label{unconditioned}
P(\cup_{n=1}^M\{Z_{2n}^{N,i}=-1\})=1-(1-a_N)^M.
\end{equation}
From the definitions, it follows that
\begin{equation}\label{forfirstevent}
\{Z^{N,i}_n\in\{0,-1\}\}=\cap_{m=(n-1)N}^{nN-1}\{\frac{S^{(i)}_{m,N+m}}N<r_{i+1}\}
\end{equation}
and
\begin{equation}\label{alsofirstevent}
\{Z_{2n}^{N,i}=-1\}=\big(\cap_{m=(2n-1)N}^{2nN-1}\{\frac{S_{m,N+m}^{(i)}}N<r_{i+1}\}\big)
\cap\big(\cup_{m=(2n-1)N}^{2nN-1}\{\frac{S_{m,N+m}^{(i)}}N< r_i\}\big).
\end{equation}
From \eqref{forfirstevent} and \eqref{alsofirstevent}, along with the FKG inequality \eqref{FKG}, we have
\begin{equation}\label{poscorragain}
P\big(\cap_{n=1}^{2M}
\big\{Z_n^{N,i}\in\{0,-1\}\big\}\big|\cup_{n=1}^M\{Z_{2n}^{N,i}=-1\}\big)\ge P\big(\cap_{n=1}^{2M}
\big\{Z_n^{N,i}\in\{0,-1\}\big\}\big).
\end{equation}
To see this, let
$x=(x_1,\cdots, x_{(2M+1)N-1})\in\mathbb{R}^{(2M+1)N-1}$, let
$s_{i,j}=\sum_{k=i+1}^jx_k$, for $0\le i<j\le (2M+1)N-1$, and define
$$
\begin{aligned}
&f(x)=1_{\max\big(\frac{s_{m,N+m}}N:\ m\in\cup_{n=1}^M\{(2n-1)N,\cdots, 2nN-1\}\big)<r_{i+1}}\times\\
&\prod_{n=1}^M1_{\min\big(\frac{s_{m,N+m}}N:\ k\in\{(2n-1)N,\cdots, 2nN-1\}\big)<r_i};\\
&g(x)=1_{\max\big(\frac{s_{m,N+m}}N:\ m\in\cup_{n=1}^{2M}\{(n-1)N,\cdots, nN-1\}\big)<r_{i+1}}.
\end{aligned}
$$
 Denote the increments of the random walk $\{S_n^{(i)}\}_{n=0}^\infty$ by $\{W^{(i)}_n\}_{n=1}^\infty$,  and let
 $W^{(i)}=(W^{(i)}_1,\cdots, W^{(i)}_{(2M+1)N-1})$.
Then \eqref{poscorragain} is equivalent to $Ef(W^{(i)})g(W^{(i)})\ge Ef(W^{(i)})Eg(W^{(i)})$, and this latter inequality follows from \eqref{FKG}.

Similarly, the FKG inequality \eqref{FKG} gives
$$
\begin{aligned}
P\big(\cap_{n=1}^{2M}
\big\{Z_n^{N,i}\in\{0,-1\}\big\}\big)\ge \big(P(
Z_n^{N,i}\in\{0,-1\})\big)^{2M}=
(1-b_N)^{2M}.
\end{aligned}
$$
Thus,
\begin{equation}\label{conditioned}
\begin{aligned}
&P\big(\cap_{n=1}^{2M}
\big\{Z_n^{N,i}\in\{0,-1\}\big\}\big|\cup_{n=1}^M\{Z_{2n}^{N,i}=-1\}\big)\ge(1-b_N)^{2M}.
\end{aligned}
\end{equation}

Now choose $M=[\frac1{b_N}]$. We consider the two cases
 $I_i(r_i)>I_i(r_{i+1})$ and $I_i(r_i)=I_i(r_{i+1})$ separately. We first consider the former case.
Note that $\lim_{N\to\infty}\frac{a_N}{b_N}=0$. Since $1-a_N\le e^{-a_N}$,   from \eqref{unconditioned},
\begin{equation}\label{unconditionedagain}
P(\cup_{n=1}^{[\frac1{b_N}]}\{Z_{2n}^{N,i}=-1\})\ge1-e^{-a_N[\frac1{b_N}]}\ge\frac{a_N}{2b_N},\ \text{for large}\ N.
\end{equation}
From \eqref{conditioned},
\begin{equation}\label{conditionedagain}
\liminf_{N\to\infty}P\big(\cap_{n=1}^{2[\frac1{b_N}]}
\big\{Z_n^{N,i}\in\{0,-1\}\big\}\big|\cup_{n=1}^{[\frac1{b_N}]}\{Z_{2n}^{N,i}=-1\}\big)\ge e^{-2}.
\end{equation}
From \eqref{aNbN},
\eqref{estimate}, \eqref{unconditionedagain} and \eqref{conditionedagain}, we conclude that
\begin{equation}\label{lowerbound}
P\big(\frac{S^{(i)}_{\tau^{N,i},N+\tau^{N,i}}}N<r_j\big)\gtrapprox e^{-N\big(I_i(r_i)-I_i(r_{i+1})\big)}.
\end{equation}

Now consider the case  $I_i(r_i)=I_i(r_{i+1})$.
Then similar to \eqref{unconditionedagain}, we have
\begin{equation}\label{unconditionedagain2}
P(\cup_{n=1}^{[\frac1{b_N}]}\{Z_{2n}^{N,i}=-1\})\ge1-e^{-a_N[\frac1{b_N}]}\ge\min(c, \frac{a_N}{2b_N}),\ \text{for some}\ c>0.
\end{equation}
Then from  \eqref{aNbN},
\eqref{estimate}, \eqref{unconditionedagain2}  \eqref{conditionedagain} and the fact that $a_N\approx b_N$,  we obtain \eqref{lowerbound}.
The first formula in \eqref{tauformula} follows from \eqref{upperbound} and \eqref{lowerbound}.
\end{proof}

Recall the process $\{Y^{N;D}_m\}_{m=0}^\infty$   mentioned at the end of section  \ref{intro}; it
denotes the Markov processes that follows the changes of the increment distribution utilized by
the delayed version $\{X_n^{N;D}\}_{n=0}^\infty$    of the  random walk reinforced by its recent history.
We denote the transitions for $\{Y^{N;D}_m\}_{m=0}^\infty$ by
$$
p^{N;D}_{i,j}=P(Y^{N;D}_{m+1}=j|Y^{N;D}_m=i),\ i,j\in\{0,\cdots, l\}, \ j=i\pm1.
$$

Using Proposition \ref{keyprop1}, the following estimates on these transition probabilities are almost immediate.
\begin{proposition}\label{transest}
\begin{equation}\label{transitions}
\begin{aligned}
&p^{N;D}_{i,i+1}\approx e^{-N\big(I_i(r_{i+1})-I_i(r_i)\big)^+};\ i\in\{1,\cdots, l-1\};\\
&p^{N;D}_{i,i-1}\approx e^{-N\big(I_i(r_i)-I_i(r_{i+1})\big)^+};\ i\in\{1,\cdots, l-1\};\\
&p^{N;D}_{0,1}=p^{N;D}_{l,l-1}=1.\\
\end{aligned}
\end{equation}
\end{proposition}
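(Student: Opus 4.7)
The plan is to identify each interior transition probability with the exit distribution already computed in Proposition \ref{keyprop1}, and to dispatch the two boundary cases separately using Assumption B and the fact that $r_0=-\infty$, $r_{l+1}=+\infty$. For $1\le i\le l-1$, suppose that at some step the chain $\{Y^{N;D}_m\}$ enters state $i$; by the delayed-version protocol the next $N$ increments of $\{X^{N;D}_n\}$ are IID with distribution $P_i^{(\text{inc})}$, and thereafter the chain remains in state $i$ at every subsequent time $n$ for which the running average $\frac{S^{(i)}_{n,N+n}}{N}$ still lies in $[r_i,r_{i+1})$. The first time this average exits the interval is precisely $\tau^{N,i}$ as defined in \eqref{tau}; the direction of exit dictates the direction of the jump in $Y^{N;D}$. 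Applying the strong Markov property at the entrance time to state $i$ therefore gives
\[
p^{N;D}_{i,i+1}=P\!\left(\tfrac{S^{(i)}_{\tau^{N,i},N+\tau^{N,i}}}{N}\ge r_{i+1}\right),\qquad p^{N;D}_{i,i-1}=P\!\left(\tfrac{S^{(i)}_{\tau^{N,i},N+\tau^{N,i}}}{N}< r_i\right),
\]
and the first two lines of \eqref{transitions} then follow immediately from Proposition \ref{keyprop1}.

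For the boundary state $i=0$, the process uses the distribution $P_0^{(\text{inc})}$, whose mean $\mu_0$ lies strictly below $r_1$ by Assumption A and which puts positive mass in $[r_1,\infty)$ by Assumption B. Since $r_0=-\infty$, the running average cannot exit $(-\infty,r_1)$ through the left, so the only way to leave state $0$ is a jump to state $1$; the finiteness of the corresponding stopping time is guaranteed because Assumption B makes the tail event $P(\frac{S^{(0)}_N}{N}\ge r_1)$ positive, hence recurrent over independent blocks. Therefore $p^{N;D}_{0,1}=1$, and the symmetric argument at the top of the ladder (using $r_{l+1}=+\infty$ and $P_l^{(\text{inc})}((-\infty,r_l))>0$) gives $p^{N;D}_{l,l-1}=1$.

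The one piece that needs care is the bookkeeping in the identification above: when the chain enters state $i$ in the middle of the evolution, the first $N$ increments occurring while in state $i$ are themselves forced to be IID with distribution $P_i^{(\text{inc})}$ because the delayed rule disallows any earlier switch, so the averages checked at each subsequent step genuinely correspond to the averages of an IID $S^{(i)}$-walk and not to a mixture. Once this point is verified, the identification with Proposition \ref{keyprop1} is exact and no fresh large-deviations estimate is needed; the main obstacle, namely obtaining the sharp exponential rates, has already been absorbed into that proposition.
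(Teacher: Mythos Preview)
Your argument is correct and coincides with the paper's own proof: you identify $p^{N;D}_{i,i\pm1}$ with the exit probabilities $P\big(\tfrac{S^{(i)}_{\tau^{N,i},N+\tau^{N,i}}}{N}\ge r_{i+1}\big)$ and $P\big(\tfrac{S^{(i)}_{\tau^{N,i},N+\tau^{N,i}}}{N}< r_i\big)$ and invoke Proposition~\ref{keyprop1}, while the boundary equalities hold by the conventions $r_0=-\infty$, $r_{l+1}=+\infty$. The paper simply records the identification and says the third line ``follows by definition''; your added justification of why the first $N$ steps after entering state $i$ are genuinely IID $P_i^{(\text{inc})}$-increments is a helpful elaboration but not a different method.
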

\begin{proof}
The third line in \eqref{transitions} follows by definition.
Noting that
$$
p^{N;D}_{i,i+1}=P(\frac{S^{(i)}_{\tau^{N,i},N+\tau^{N,i}}}N\ge r_{i+1}),\ \
p^{N;D}_{i,i-1}=P(\frac{S^{(i)}_{\tau^{N,i},N+\tau^{N,i}}}N< r_i),\ \
$$
the first two lines of \eqref{transitions}
 follow  from Proposition \ref{keyprop1}.
\end{proof}

Denote the invariant distribution of the Markov chain $\{Y^{N;D}_m\}_{m=0}^\infty$
on $\{0,\cdots, l\}$
by $\nu^{N;D}$.
The Markov chain  $\{Y^{N;D}_m\}_{m=0}^\infty$ is a birth and death process, thus reversible, so its invariant distribution
can be calculated explicitly, via
 the detailed balance equations: $\nu^{N;D}(i)p^{N;D}_{i,i+1}=\nu^{N;D}(i+1)p^{N;D}_{i+1,i}$, \ $i=0,\cdots, l-1$.
As is well-known, one has
\begin{equation}\label{muhatdist}
\begin{aligned}
&\Pi_N\nu^{N,D}(0)=1;\\
&\Pi_N\nu^{N,D}(k)=\prod_{i=1}^k\frac{p^{N;D}_{i-1,i}}{p^{N;D}_{i,i-1}},\ k=1,\cdots, l,\\
&\text{where}\ \Pi_N=1+\sum_{k=1}^l\prod_{i=1}^k\frac{p^{N;D}_{i-1,i}}{p^{N;D}_{i,i-1}}.
\end{aligned}
\end{equation}

Recall the definition of  $\tau^{N,i},\ 1\le i\le l-1$,
from  \eqref{tau}.
Define
$$
\tau^{N,0}=\inf\big\{n\ge0:\frac{S^{(0)}_{n,N+n}}N\ge r_1 \};\ \ \
 \tau^{N,l}=\inf\big\{n\ge0:\frac{S^{(l)}_{n,N+n}}N< r_l \}.
 $$
 Anytime the delayed version of the random walk reinforced by its
recent history  switches to regime $i$, the  number of steps during which it will operate in this regime before moving
to a different regime is distributed as $\tau^{N,i}+N$,
and the distance it travelled  between its entrance into regime $i$ and its exit to another regime
is distributed as $S^{(i)}_{\tau^{N,i}+N}$. The next two propositions calculate the expected values of these two distributions.
\begin{proposition}\label{exptau}
\begin{equation}\label{exptauformula}
\begin{aligned}
&E\tau^{N,i}\approx e^{N\min\big(I_i(r_i),\thinspace I_i(r_{i+1})\big)},\ 1\le i\le l-1;\\
&E\tau^{N,l,l}\approx e^{NI_l(r_l)};\ \  E\tau^{N,0}\approx e^{NI_0(r_1)}.\\
\end{aligned}
\end{equation}
\end{proposition}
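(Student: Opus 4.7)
The plan is to sandwich $E\tau^{N,i}$ between two quantities of the same exponential order, using the block structure of $Z^{N,i}$ for the upper bound and a direct union bound for the lower bound. For $1\le i\le l-1$, write $\beta_i:=\min\bigl(I_i(r_i),I_i(r_{i+1})\bigr)$. Combining the three parts of Proposition \ref{key} gives $p_N:=P(Z^{N,i}_1\ne 0)\approx e^{-N\beta_i}$, since on the exponential scale the $=-11$ term is negligible and $P(Z^{N,i}_1=1)+P(Z^{N,i}_1=-1)$ is dominated by $\max\bigl(e^{-NI_i(r_i)},e^{-NI_i(r_{i+1})}\bigr)$.

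For the upper bound, I use only the even-indexed blocks $\{Z^{N,i}_{2n}\}_{n\ge 1}$, which are IID. With $\sigma_i^{(e)}$ defined as in \eqref{stopping}, $\sigma_i^{(e)}/2$ is geometric with parameter $p_N$, so $E\sigma_i^{(e)}=2/p_N$. On $\{\sigma_i^{(e)}=2k\}$, the definition of $Z^{N,i}_{2k}$ forces some $m\in[(2k-1)N,2kN-1]$ with $S^{(i)}_{m,N+m}/N\notin[r_i,r_{i+1})$, whence $\tau^{N,i}\le 2kN-1\le N\sigma_i^{(e)}$. Therefore
\begin{equation*}
E\tau^{N,i}\;\le\;N\,E\sigma_i^{(e)}\;=\;2N/p_N\;\approx\;e^{N\beta_i}.
\end{equation*}

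For the lower bound, I avoid the non-IID character of $\{Z^{N,i}_n\}$ altogether and work with the individual running averages. Fix $\varepsilon>0$ and set $K_N=\lfloor e^{N(\beta_i-\varepsilon)}\rfloor$. By the union bound, stationarity of the increments, and \eqref{ld},
\begin{equation*}
P(\tau^{N,i}\le K_N)\;\le\;\sum_{m=0}^{K_N}\Bigl[P\bigl(S^{(i)}_{m,N+m}/N\ge r_{i+1}\bigr)+P\bigl(S^{(i)}_{m,N+m}/N<r_i\bigr)\Bigr]\;\le\; 2(K_N+1)\,e^{-N(\beta_i-\varepsilon/2)}
\end{equation*}
for $N$ large, which is bounded by $4e^{-N\varepsilon/2}\to 0$. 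Consequently $P(\tau^{N,i}>K_N)\to 1$ and $E\tau^{N,i}\ge K_N(1-o(1))\gtrapprox e^{N(\beta_i-\varepsilon)}$. Letting $\varepsilon\downarrow 0$ yields $E\tau^{N,i}\gtrapprox e^{N\beta_i}$, matching the upper bound.

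The boundary cases $i=0$ and $i=l$ are handled identically, but with a one-sided exit condition, so only the analogue of the first half of Proposition \ref{key} is needed. For $i=0$, set $\widetilde Z^{N,0}_n=1$ if $\max_{m\in[(n-1)N,nN-1]}S^{(0)}_{m,N+m}/N\ge r_1$ and $0$ otherwise; the argument in \eqref{easyupper}--\eqref{trivlower} (which does not use the conditioning step) gives $P(\widetilde Z^{N,0}_1=1)\approx e^{-NI_0(r_1)}$, and the even-block upper bound together with the union-bound lower bound go through verbatim, yielding $E\tau^{N,0}\approx e^{NI_0(r_1)}$; the case $\tau^{N,l}$ is symmetric. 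The only genuine subtlety in the proof is that the unthinned sequence $\{Z^{N,i}_n\}_{n\ge 1}$ is not IID because consecutive blocks share increments, which is precisely why one thins to the even subsequence for the upper bound; the resulting loss of a factor of $2N$ is invisible on the exponential scale, and the lower bound dispenses with blocks altogether.
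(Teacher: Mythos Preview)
Your proof is correct and follows essentially the same strategy as the paper's. For the upper bound you package the paper's tail-sum argument into the cleaner pointwise inequality $\tau^{N,i}\le N\sigma_i^{(e)}$, and for the lower bound you union-bound directly over the running averages $S^{(i)}_{m,N+m}/N$ rather than over the block variables $Z^{N,i}_n$; since the paper's bound $P(Z^{N,i}_n=0,\,n\le 2L)\ge 1-2LP(Z^{N,i}_1\neq 0)$ is itself just a union bound, the two arguments are equivalent up to cosmetics.
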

\begin{proof}
Let $1\le i\le l-1$. Using the notation from the  proof of Proposition \ref{keyprop1}, for any positive integer $L$, we have
\begin{equation}\label{tausigma}
\begin{aligned}
&\{\tau^{N,i}\ge 2LN\}=\{Z^{N,i}_n=0,\ \text{for all}\ n=1,\cdots, 2L\}=\\
&\{\sigma^{(e)}_{i}>2L,\sigma^{(o)}_{i}>2L-1\}.
\end{aligned}
\end{equation}
Since $\sigma^{(e)}_{i}$ and $\sigma^{(o)}_{i}+1$ have the same distribution, it follows that
\begin{equation}\label{22L}
 P(\tau^{N,i}\ge 2LN)\le P(\sigma^{(e)}_{i}>2L),\ L\ge1.
\end{equation}
We have
\begin{equation}\label{larger2LN}
\sum_{L=0}^\infty  P(\tau^{N,i}\ge 2LN)\ge\sum_{m=0}^\infty(\frac m{2N}+1)P(\tau^{N,i}=m)=1+\frac1{2N}E\tau^{N,i}.
\end{equation}
From the definition of $\sigma^{(e)}_{i}$ along with Proposition \ref{key},
$\sigma^{(e)}_{i}$ is distributed according to a geometric
distribution with parameter $p\approx  e^{-N\min\big(I_i(r_i),\thinspace I_i(r_{i+1})\big)}$; thus,
$E\sigma^{(e)}_{i}\approx e^{N\min\big(I_i(r_i),\thinspace I_i(r_{i+1})\big)}$.
Consequently,
\begin{equation}\label{larger2L}
\sum_{L=0}^\infty P(\sigma^{(e)}_{i}>2L)\le \sum_{L=1}^\infty P(\sigma^{(e)}_{i}\ge L)=E\sigma^{(e)}_{i}
\approx e^{N\min\big(I_i(r_i),\thinspace I_i(r_{i+1})\big)}.
\end{equation}
From \eqref{22L}-\eqref{larger2L}, we obtain
\begin{equation}\label{upperbd}
E\tau^{N,i}\lessapprox e^{N\min\big(I_i(r_i),\thinspace I_i(r_{i+1})\big)}.
\end{equation}

From  Proposition \ref{key} and  the definition of
$\sigma^{(e)}_{i}$ and $\sigma^{(o)}_{i}$, we have for any $\epsilon>0$ and sufficiently large $N$,
\begin{equation}\label{sigmasinequ}
\begin{aligned}
&P(\sigma^{(e)}_{i}>2L,\sigma^{(o)}_{i}>2L-1)=P(Z^{N,i}_n=0, \ \text{for}\ n=1,\cdots, 2L)\ge\\
&1-2LP(Z^{N,i}_1\neq0)\ge 1-2Le^{\epsilon N-N\min\big(I_i(r_i),\thinspace I_i(r_{i+1})\big)}.
\end{aligned}
\end{equation}
Letting $L_{N,\epsilon}=[e^{-2\epsilon N+N\min\big(I_i(r_i),\thinspace I_i(r_{i+1})\big)}]$, it follows from
\eqref{tausigma} and \eqref{sigmasinequ} that
$\lim_{N\to\infty}P(\tau^{N,i}\ge 2L_{N,\epsilon}N)=1$. Since $\epsilon>0$ is arbitrary, it follows that
\begin{equation}\label{lowerbd}
E\tau^{N,i}\gtrapprox e^{N\min\big(I_i(r_i),\thinspace I_i(r_{i+1})\big)}.
\end{equation}
The first formula in \eqref{exptauformula} follows from \eqref{upperbd} and \eqref{lowerbd}.

The statements of Proposition \ref{key} and Proposition \ref{keyprop1} involve certain two-sided hitting  times related
to  a random walk with increment
distribution $P_i^{(\text{inc})}$, with $1\le i\le l-1$. Similar one-sided results could have been written down for $i=0$ and
$i=l$.
We refrained from including them in order not to incur the necessity of additional notation and an additional analogous proof.
The second  formula in  \eqref{exptauformula} is proved similarly to the first formula using the corresponding
one-sided hitting times.
\end{proof}
\begin{proposition}\label{MG}
\begin{equation}\label{Stau}
ES^{(i)}_{\tau^{N,i}+N}=\mu_i(E\tau^{N,i}+N),\ 0\le i\le l.
\end{equation}
\end{proposition}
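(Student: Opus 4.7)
The plan is to recognize \eqref{Stau} as a direct application of Wald's identity to the random walk $\{S^{(i)}_n\}_{n=0}^\infty$ at the random time $T:=\tau^{N,i}+N$. Wald's identity states that if $T$ is a stopping time for the natural filtration $\mathcal{F}_n=\sigma(S^{(i)}_0,\ldots,S^{(i)}_n)$ with $ET<\infty$, and the increment mean $\mu_i$ is finite, then $ES^{(i)}_T=\mu_i\,ET$. Applied here, this would immediately give $ES^{(i)}_{\tau^{N,i}+N}=\mu_i(E\tau^{N,i}+N)$.

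The one genuine thing to check is that $T=\tau^{N,i}+N$ is in fact a stopping time. Note that $\tau^{N,i}$ itself is \emph{not} a stopping time with respect to $\{\mathcal{F}_n\}$, since the event $\{\tau^{N,i}\le n\}$ requires examining the averages $S^{(i)}_{k,N+k}/N$ for $k\le n$, hence depends on $S^{(i)}_0,\ldots,S^{(i)}_{n+N}$. This is precisely an $N$-step look-ahead. However, the shifted time $T$ satisfies $\{T\le n\}=\{\tau^{N,i}\le n-N\}$, which is empty for $n<N$ and, for $n\ge N$, depends only on $S^{(i)}_0,\ldots,S^{(i)}_n$. Thus $T$ is adapted to $\{\mathcal{F}_n\}$. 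This observation is also the structural reason that the $N$ appears in \eqref{Stau}.

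The integrability hypothesis $ET<\infty$ follows from Proposition \ref{exptau}, which bounds $E\tau^{N,i}$ by a quantity of order $e^{N\min(I_i(r_i),I_i(r_{i+1}))}$ (and similarly for the boundary cases $i=0,l$); for each fixed $N$ this is finite, so $E(\tau^{N,i}+N)<\infty$. Assumption C in particular ensures that $\mu_i$ is finite. Wald's identity therefore applies and yields \eqref{Stau} for $1\le i\le l-1$.

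For $i=0$ and $i=l$, the same reasoning applies verbatim with the corresponding one-sided stopping rules: $\tau^{N,0}+N$ and $\tau^{N,l}+N$ are stopping times by the identical shift argument, and the required integrability is given by the second line of \eqref{exptauformula}. There is no substantive obstacle in the proof; the only subtlety worth highlighting is the need to pass from $\tau^{N,i}$ to $\tau^{N,i}+N$ to obtain a bona fide stopping time.
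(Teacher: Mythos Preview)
Your argument is correct and is essentially the same as the paper's: the paper applies Doob's optional sampling theorem to the martingale $S^{(i)}_{n+N}-(n+N)\mu_i$ at the stopping time $\tau^{N,i}+N$ and then lets the truncation level go to infinity using Proposition \ref{exptau}, which is exactly the standard derivation of Wald's identity. Your explicit discussion of why $\tau^{N,i}+N$ (but not $\tau^{N,i}$) is a stopping time is a nice clarification of a point the paper asserts without elaboration.
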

\begin{proof}
Let $\{W_n\}_{n=1}^\infty$ be iid random variables distributed according to $P_i^{(\text{inc})}$ and consider  the filtration
 $\mathcal{F}_n=\sigma\Big(W_1,\cdots, W_n\Big), \ n\ge1$.
 We can write  $S^{(i)}_n=\sum_{j=1}^n W_j$.
Now $M_{n+N}:=S^{(i)}_{n+N}-(n+N)\mu_i$, $n\ge0$, is a martingale with respect to
$\{\mathcal{F}_{n+N}\}_{n=0}^\infty$. Note that $N+\tau^{N,i}$ is a stopping time with respect to $\{\mathcal{F}_{n+N}\}_{n=0}^\infty$.
 So by Doob's optional sampling theorem,
$$
ES_{(\tau^{N,i}+N)\wedge L}-\mu_iE((\tau^{N,i}+N)\wedge L)=0,\ L\ge0.
$$
Letting $L\to\infty$ and using  \eqref{exptauformula}, we obtain  \eqref{Stau}.
\end{proof}

\section{Proof of Theorem \ref{DD1}}\label{DD1proof}
Recall that $\nu^{N,D}$ denotes the invariant distribution  of
the processe $\{Y^{N;D}_m\}_{m=0}^\infty$.
By the ergodic theorem,   as $m\to\infty$ the asymptotic
 proportion of switches of the  process  $\{Y^{N;D}_m\}_{m=0}^\infty$ for the  delayed process
to the regime $i$ is $\nu^{N,D}(i)$.
As noted before Proposition \ref{exptau},
anytime the delayed   version of the random walk reinforced by its recent history  switches to regime $i$, the  number of steps during which it will operate in this regime before moving
to a different regime is distributed as $\tau^{N,i}+N$,
and the distance travelled by the process between its entrance into regime $i$ and its exit to another regime
is distributed as $S^{(i)}_{\tau^{N,i}+N}$.
Also, this random number of steps and this random distance travelled are
independent of the random number of steps the process spent and the random distance it travelled in any regime  in the past before the present entrance into
regime $i$.
From these observations,  it is standard to deduce that the speed $s^D(N,r_1,\cdots, r_l)$ , defined in Proposition \ref{speed},
 exists almost surely and is almost surely given by the constant
 \begin{equation}\label{explicitspeed}
s^D(N,r_1,\cdots, r_l)=\frac{\sum_{i=0}^l\nu^{N,D}(i)ES^{(i)}_{\tau^{N,i}+N}}{\sum_{i=0}^l\nu^{N,D}(i)\big(E\tau^{N,i}+N\big)}.
 \end{equation}
 This proves Proposition \ref{speed}.

By Propositions
\ref{exptau} and \ref{MG},
\begin{equation}\label{keyterm1}
\begin{aligned}
&ES^{(i)}_{\tau^{N,i}+N}\approx\mu_ie^{N\min\big(I_i(r_i),I_i(r_{i+1})\big)},\ \text{for}\ 1\le i\le l-1;\\
&ES^{(0)}_{\tau^{N,0}+N}\approx\mu_0 e^{NI_0(r_1)},\
ES^{(l)}_{\tau^{N,l}+N}\approx\mu_l e^{NI_l(r_l)}.
\end{aligned}
\end{equation}
From \eqref{muhatdist} and Proposition \ref{transest}, we have
\begin{equation}\label{keyterm2}
\begin{aligned}
&\nu^{N;D}(0)\approx\frac1{\Pi_N};\\
&\nu^{N;D}(1)\approx\frac1{\Pi_N}e^{N\big(I_1(r_1)-I_1(r_2)\big)^+};\\
&\nu^{N;D}(i)\approx\frac1{\Pi_N}e^{N\big(I_1(r_1)-I_1(r_2)\big)^+}\prod_{k=2}^ie^{N\Big(\big(I_k(r_k)-I_k(r_{k+1})\big)^+
-(I_{k-1}(r_k)-I_{k-1}(r_{k-1})\big)^+\Big)},\\
& 1\le i\le l-1;\\
&\nu^{N;D}(l)\approx\frac1{\Pi_N}e^{N\big(I_1(r_1)-I_1(r_2)\big)^+}\prod_{k=2}^{l-1}e^{N\Big(\big(I_k(r_k)-I_k(r_{k+1})\big)^+
-(I_{k-1}(r_k)-I_{k-1}(r_{k-1})\big)^+\Big)}\times\\
& e^{-N\big(I_{l-1}(r_l)-I_{l-1}(r_{l-1})\big)^+}.
\end{aligned}
\end{equation}
Noting that
$\big(I_k(r_k)-I_k(r_{k+1})\big)^+-(I_k(r_{k+1})-I_k(r_k)\big)^+=I_k(r_k)-I_k(r_{k+1})$
and recalling the definition of $\{\Lambda_i\}_{i=0}^l$ in the statement of Theorem \ref{DD1},
it follows from \eqref{keyterm1} and \eqref{keyterm2} that
\begin{equation}\label{keyterm12}
\nu^{N,D}(i)ES^{(i)}_{\tau^{N,i}+N}
\approx
\frac1{\Pi_N}\mu_ie^{N\Lambda_i},\ 0\le i\le l.
\end{equation}
Substituting \eqref{keyterm12} into the second equation in \eqref{explicitspeed},
recalling from Propositions \ref{exptau} and \ref{MG} that
$\frac{ES^{(i)}_{\tau^{N,i}+N}}{E\tau^{N,i}+N}\approx\mu_i$,
 and letting $N\to\infty$ proves the theorem.

\hfill $\square$





\section{Proof of Theorem \ref{instant}}\label{instantproof}
For the proof of Theorem \ref{instant}, we need the following lemma.
\begin{lemma}\label{lemma1}
Let $\{Z_n\}_{n=1}^\infty$ be IID random variables satisfying $EZ_1=\mu$ and let $S_n=\sum_{i=1}^nZ_i$.
Then for every $r<\mu$,
\begin{equation}\label{alwaysabove}
P(\frac{S_n}n\ge r,\  n=1,2,\cdots)>0.
\end{equation}
\end{lemma}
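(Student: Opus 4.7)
The plan is to reduce the statement to a first-passage argument for the random walk $\tilde{S}_n = S_n - nr$ with i.i.d.\ increments $\tilde{Z}_i := Z_i - r$. Since $E\tilde{Z}_1 = \mu - r > 0$, the event in \eqref{alwaysabove} is exactly $\{\tilde{S}_n \ge 0 \text{ for all } n \ge 1\} = \{T = \infty\}$, where
$$T = \inf\{n \ge 1 : \tilde{S}_n < 0\}$$
is the first strict descending passage time (with $\inf\emptyset = \infty$). Thus it suffices to show $P(T < \infty) < 1$.

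I would argue by contradiction: assume $P(T < \infty) = 1$. By the strong Markov property one iterates $T$ to produce stopping times $T_0 = 0$ and $T_{k+1} = T_k + T \circ \theta_{T_k}$, each almost surely finite and strictly increasing, with increments $\tilde{S}_{T_k} - \tilde{S}_{T_{k-1}}$ that are i.i.d.\ copies of $\tilde{S}_T$ and strictly negative almost surely. Choosing $\delta > 0$ with $P(\tilde{S}_T < -\delta) > 0$ and applying the second Borel--Cantelli lemma to this i.i.d.\ sequence, infinitely many of these increments lie below $-\delta$ almost surely, so $\tilde{S}_{T_k} \to -\infty$ a.s.

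On the other hand, the strong law of large numbers applied to $\{\tilde{Z}_i\}$ yields $\tilde{S}_n/n \to \mu - r > 0$ a.s., and hence $\tilde{S}_n \to +\infty$. Combined with $T_k \to \infty$ (since the $T_k$ are strictly increasing and a.s.\ finite), this forces $\tilde{S}_{T_k} \to +\infty$ a.s., contradicting the previous paragraph. Therefore $P(T = \infty) > 0$, as required. I expect the only point demanding care to be the divergence of i.i.d.\ sums of strictly negative variables without assuming a first moment, which is precisely what the Borel--Cantelli step above handles; everything else is standard manipulation of the strong Markov property and the SLLN.
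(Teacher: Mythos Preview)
Your proof is correct, but it follows a genuinely different route from the paper's own argument.

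The paper proceeds by splitting the event at a finite time $N_r$: by the strong law of large numbers, $P(\frac{S_n}{n}\ge r,\ n>N_r)>0$ for some $N_r$, and $P(\frac{S_n}{n}\ge r,\ n=1,\ldots,N_r)>0$ is immediate (e.g.\ because $P(Z_1\ge r)>0$ when $EZ_1=\mu>r$). The paper then invokes the FKG correlation inequality---both events being increasing in the increments $(Z_1,Z_2,\ldots)$---to conclude that their intersection has positive probability. This is in keeping with the paper's systematic use of FKG elsewhere.

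Your argument instead recasts the problem as a first-passage question for the positively drifted walk $\tilde S_n=S_n-nr$ and uses the classical ladder-variable contradiction: if the first strict descending ladder epoch $T$ were almost surely finite, iterating it via the strong Markov property yields i.i.d.\ strictly negative ladder heights, and Borel--Cantelli forces $\tilde S_{T_k}\to-\infty$, contradicting $\tilde S_n\to+\infty$ from the SLLN. This is a standard and fully rigorous piece of fluctuation theory; your care with the Borel--Cantelli step (to avoid assuming a moment for $\tilde S_T$) is well placed.

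Both approaches are short and sound. The paper's has the virtue of reusing a tool already central to the article; yours is more self-contained, avoids correlation inequalities entirely, and makes transparent the connection to the classical fact that a random walk with positive mean has positive probability of staying nonnegative forever.
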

\begin{proof}
By the strong law of large numbers, $\lim_{n\to\infty}\frac{S_n}n=\mu$ a.s.
Thus, for every $r<\mu$, there exists an $N_r$ such that
$P(\frac{S_n}n\ge r,\  n>N_r)>0$.
Clearly, $P(\frac{S_n}n\ge r,\  n=1,\cdots, N_r)>0$.
Since the events $\{\frac{S_n}n\ge r\ n=1,\cdots, N_r\}$ and
$\{\frac{S_n}n\ge r,\  n>N_r\}$ are positively correlated, we have
$$
\begin{aligned}
&P(\frac{S_n}n\ge r,\  n=1,2,\cdots)=\\
&P(\frac{S_n}n\ge r,\  n=1,\cdots, N_r)P(\frac{S_n}n\ge r,\ n>N_r|\frac{S_n}n\ge r,\ n=1,\cdots, N_r)>0.
\end{aligned}
$$
\end{proof}
We now turn to the proof of the theorem.

\it\noindent Proof of Theorem \ref{instant}.\rm\ Without loss of generality, assume that $I_0(r_1)<I_1(r_1)$.
Since clearly $\lim_{N\to\infty}\limsup_{n\to\infty}\frac{X^{N;I}_n}n\le\mu_1$, what we need to
 prove is  that
\begin{equation}\label{instantspeedmu1}
\lim_{N\to\infty}\liminf_{n\to\infty}\frac{X^{N;I}_n}n=\mu_1.
\end{equation}
Define
\begin{equation}\label{calwaysabove}
c:=P(\frac{S^{(1)}_n}n\ge r_1,\ n=1,2,\cdots)>0,
\end{equation}
 where the positivity of $c$ follows from Lemma \ref{lemma1}.
Without loss  of generality, we will start  the instantaneous process $\{X^{N;I}_n\}_{n=0}^\infty$  in the $P_0^{(inc)}$-mode.
The process will eventually switch to the $P_1^{(inc)}$-mode, then switch back to the $P_0^{(inc)}$, etc.

Let $T^{N,1}_m, m\ge1,$ denote the number of steps the instantaneous process spends in the $P_1^{(inc)}$-mode during its $m$th session in that mode, and let
 $T_m^{N,0}, m\ge1,$ denote the number of steps the instantaneous
process spends in the $P^{(\text{inc})}_0$-mode during its $m$th session in that mode.

 Clearly $T_m^{N,0}$, for any $m\ge1$, is stochastically dominated by $N+\tau^{N,0}$, where $\tau^{N,0}$ is as in \eqref{tau}. (There is equality for $m=1$.)

The event that for all  $j=1,\cdots, N$, the average value of the first $j$ steps of a
$P_1^{(inc)}$-random walk is    greater or equal to $r_1$ has probability greater than $c$. Thus, with probability greater than
$c$, the instantaneous process will spend at least $N$ steps in the $P_1^{(inc)}$-mode during any  session in that mode.
It follows then that $T^{N,1}_m$, for any $m\ge1$, stochastically dominates
$(N+\tau^{N,1})\text{Ber}(c)$, where $\tau^{N,1}$ is as in \eqref{tau}, $\text{Ber}(c)$ denotes a Bernoulli random variable
with probability $c$ of being equal to 1 and probability $1-c$ of being equal to 0, and $\tau^{N,1}$ and $\text{Ber}(c)$
are independent. We note that there are two reasons that $T^{N,1}_m$ stochastically dominates
$(N+\tau^{N,1})\text{Ber}(c)$. One is that the  probability of the event described above is greater than $c$. The other
is that  $\tau^{N,1}$,  the number of steps the delayed process remains in the $P_1^{(inc)}$-mode
after its first $N$ steps in that mode, is stochastically dominated by the random variable
$T^{N,1}_m-N$ when this latter random variable is conditioned on the  event described above.
The reason for this latter domination is that whereas the first $N$ steps of the delayed process have the
distribution $\{S^{(i)}_j\}_{j=1}^N$, the first $N$ steps of the instantaneous process conditioned on the event described above
has the distribution $\{S^{(i)}_j\}_{j=1}^N$, conditioned on $\{\frac{S^{(1)}_j}j\ge r_1,\ j=1,2,\cdots, N\}$,
and by the
 FKG inequality \eqref{FKG}, the distribution $\{S^{(i)}_j\}_{j=1}^N$, conditioned on $\{\frac{S^{(1)}_j}j\ge r_1,\ j=1,2,\cdots, N\}$ dominates the distribution $\{S^{(i)}_j\}_{j=1}^N$.

The fraction of steps that the instantaneous process spends in the $P_1^{(inc)}$-mode after $m$ sessions in each mode is given by
\begin{equation}\label{asympfrac}
\frac{\sum_{k=1}^m T^{N,1}_k}{\sum_{k=1}^m(T^{N,1}_k+T^{N,0}_k)}.
\end{equation}
By
the above noted stochastic domination,
we can define on one and the same space $\{T^{N,1}_k\}_{k=1}^\infty$ and $\{T^{N,0}_k\}_{k=1}^\infty$
along with $\{\tau^{N,i}_k\}_{k=1}^\infty$, $i=0,1$, and $\{\text{Ber}(c)_k\}_{k=1}^\infty$, where these last three sequences are
mutually independent   IID sequences distributed respectively as  $\tau^{N,i}, i=0,1$, and
$\text{Ber}(c)$, such that
\begin{equation}\label{asympfracest}
\frac{\sum_{k=1}^m T^{N,1}_k}{\sum_{k=1}^m(T^{N,1}_k+T^{N,0}_k)}\ge
\frac{\sum_{k=1}^m (N+\tau^{N,1}_k)\text{Ber}(c)_k}{\sum_{k=1}^m(N+\tau^{N,1}_k)\text{Ber}(c)_k+\tau^{N,0}_k}\ \ \text{a.s.}
\end{equation}
By the strong law of large numbers,
\begin{equation}\label{rhs}
\lim_{m\to\infty}\frac{\sum_{k=1}^m (N+\tau^{N,1}_k)\text{Ber}(c)_k}{\sum_{k=1}^m(N+\tau^{N,1}_k)\text{Ber}(c)_k+\tau^{N,0}_k}=\frac{c(N+E\tau^{N,1})}{c(N+E\tau^{N,1})+E\tau^{N,0}}\ \  \text{a.s.}
\end{equation}
By Proposition \ref{exptau} (with $l=1$) and the assumption that $I_0(r_1)<I_1(r_1)$, it follows that
$\lim_{N\to\infty}\frac{E\tau^{N,1}}{E\tau^{N,0}}=\infty$. Using this with \eqref{asympfracest} and \eqref{rhs}, we conclude that the asymptotic fraction of steps that the instantaneous process spends in the
 $P_1^{(inc)}$-mode  satisfies
$$
\lim_{N\to\infty}\liminf_{m\to\infty}\frac{\sum_{k=1}^m T^{N,1}_k}{\sum_{k=1}^m(T^{N,1}_k+T^{N,0}_k)}=1\ \text{a.s.}
$$
From this we conclude that \eqref{instantspeedmu1} holds.
\hfill$\square$

\end{document}